\newtheorem{theorem}{Theorem}
\newtheorem{lemma}{Lemma}
\newtheorem{proposition}{Proposition}
\newtheorem{corollary}{Corollary}
\newtheorem{assumption}{Assumption}
\theoremstyle{definition}
\newcommand{\mbf}{\mathbf}
\newcommand{\mbb}{\mathbb}
\newcommand{\st}{\mathrm{s.t.}\;}
\newcommand{\tr}{^{\mathrm{T}}}
\newcommand{\inv}{^{-1}}
\newcommand{\abs}[1]{\left| #1 \right|}
\newcommand{\norm}[1]{\left\| #1 \right\|}
\newcommand{\set}[1]{\left\{ #1 \right\}}
\newcommand{\seq}[2][]{\left( #2 \right)_{#1}} 
\newcommand{\grad}{\nabla}
\newcommand{\smallsum}{{\textstyle{\sum}}}
\renewcommand{\hat}{\widehat}
\renewcommand{\tilde}{\widetilde}
\newcommand{\symgt}{\succ}
\newcommand{\0}{\mathbf{0}}
\renewcommand{\b}{\mathbf{b}}
\renewcommand{\c}{\mathbf{c}}
\newcommand{\p}{\mathbf{p}}
\newcommand{\q}{\mathbf{q}}
\renewcommand{\r}{\mathbf{r}}
\newcommand{\s}{\mathbf{s}}
\renewcommand{\u}{\mathbf{u}}
\newcommand{\x}{\mathbf{x}}
\newcommand{\y}{\mathbf{y}}
\newcommand{\z}{\mathbf{z}}
\newcommand{\blam}{\boldsymbol{\lambda}}
\newcommand{\bmu}{\boldsymbol{\mu}}
\newcommand{\diag}{\mathbf{Diag}}
\title{Analysis of the alternating direction method of multipliers for nonconvex problems%
\thanks{This is a slightly edited version of the article that was accepted for publication in Operations Research Forum. 
The Version of Record is available online at: \protect\url{http://dx.doi.org/10.1007/s43069-020-00043-y}.}
}
\author{Stuart M. Harwood%
\thanks{ExxonMobil Research and Engineering\newline
stuart.m.harwood@exxonmobil.com\newline
ORCID: 0000-0001-5883-9624}
}
\date{\today}
\begin{document}
\maketitle

\begin{abstract}
This work investigates the theoretical performance of the alternating-direction method of multipliers (ADMM) as it applies to nonconvex optimization problems, and in particular, problems with nonconvex constraint sets.
The alternating direction method of multipliers is an optimization method that has largely been analyzed for convex problems.
The ultimate goal is to assess what kind of theoretical convergence properties the method has in the nonconvex case, and to this end, theoretical contributions are two-fold.
First, this work analyzes the method with \emph{local} optimal solution of the ADMM subproblems, which contrasts with much analysis that requires global solutions of the subproblems.
Such a consideration is important to practical implementations.
Second, it is established that the method still satisfies a local convergence result.
The work concludes with some more detailed discussion of how the analysis relates to previous work.
\end{abstract}


\section{Introduction}
\label{sec:intro}

This work analyzes the alternating direction method of multipliers (ADMM) as it applies to nonconvex problems.
Motivation to consider ADMM is its applicability to distributed or nearly-separable systems
(see problem forms~\eqref{problem_blocks} and \eqref{problem_blocks_noy} in \S\ref{sec:prob_statement}).
Related work has focused on solving systems of equations in a parallelizable way.
For instance, \cite{zhangEA92} considers how to solve systems of equations whose Jacobian has an ``arrowhead'' type sparsity structure.
Similar considerations in the linear algebra routine of an interior-point-type method for nonlinear programming appear in \cite{chiangEA14,kangEA14}.
Another approach specific to quadratic optimization problems is considered in \cite{curtisEA17}, taking advantage of a nonsmooth reformulation.

However, it can be theoretically informative and practically advantageous to consider methods that decompose the optimization problem, rather than decompose underlying linear algebra.
This leads to methods of a primal-dual nature, relying on augmented Lagrangians and ``local'' strong duality, namely, the method of multipliers.
Early work goes back to \cite{stephanopoulosEA75}.
This work uses a quadratic penalty term, which allows strong duality to hold when the problem is restricted to a neighborhood of a local minimum.
The penalty term does not preserve separable structure, which motivated an approximation based on a Taylor expansion.
This motivates the development of ``separable'' augmented Lagrangians, the spirit of which is introduced in \cite{bertsekas79}.
In that work, new variables are introduced and a quadratic penalty is added to the objective, essentially penalizing the distance between the original variables and the new ones. 
This preserves separability of the resulting (augmented) Lagrangian, with the new variables treated through an extra level of optimization.
As noted in \cite{tanikawaEA85,fengEA90}, this results in three levels of optimization to take advantage of decomposable structure through a primal-dual method like the method of multipliers.
Consequently, the methods proposed in \cite{tanikawaEA85,fengEA90} take advantage of Fletcher's multiplier estimate to eliminate one level of optimization.
More recent work includes \cite{dinhEA13,hoursEA14}.
In \cite{hoursEA14}, they propose using a regularized block-coordinate descent method to solve the minimization of the augmented Lagrangian in the method of multipliers.
In \cite{dinhEA13}, a sequential convex programming approach is taken, and distributable methods for convex programming are used on the subproblems.

Recently, there has been a surge of work on the alternating direction method of multipliers.
This method has been well characterized for convex problems (see \cite{boydEA11} for a review).
As noted in \cite{ecksteinEA_draft}, one view of ADMM is that it is like applying a single iteration of a block-coordinate descent method to the minimization of the augmented Lagrangian in the method of multipliers.
Consequently ADMM naturally accommodates problems with a decomposable structure.
However, computational experiments in \cite{ecksteinEA_draft} suggest that ADMM is overall more computationally efficient than a ``true'' approximate method of multipliers.

Recent work has focused on applying ADMM to nonconvex problems, and establishing that it still converges.
This recent work  includes \cite{baiEA_draft}, which focuses on problems with more specific structure, such as objectives which are convex in one block of variables with the others fixed, or quadratic (but not necessarily convex).
That work also does not consider nonconvex constraints.
Meanwhile, the work in \cite{liEA15} does not explicitly consider separable constraints, but it allows for some nonsmoothness in the objective, which could allow us to handle the separable constraints through an exact nonsmooth penalty function.
Further, \cite{wangEA15} establishes convergence of ADMM under very general conditions, including the setting when there are nonconvex constraints.
However, the resulting method in this situation relies on global solution of the subproblems and the constraint set having a tractable projection operation.

The recent work in \cite{chatzipanagiotisEA15,chatzipanagiotisEA17,hongEA16,houskaEA16,magnussonEA15} all deal with ADMM-type methods in various settings.
The analysis in \cite{hongEA16} allows for nonconvex objectives, but in contrast with the present work, assumes convex constraint sets and that global solutions of the subproblems are found.
The methods considered in \cite{magnussonEA15} do not require global solution of the subproblems, like the present work, but the convergence results for ADMM in the constrained, nonconvex case, are more a statement of ``correctness'' of the method;
that is, if the iterates converge, then they converge to a stationary point.
The present work will establish conditions under which convergence occurs.
Meanwhile, \cite{chatzipanagiotisEA15,chatzipanagiotisEA17} focus on local convergence, like the present work, but while nonconvex objectives are allowed, convex constraint sets are still assumed.
Further, the methods proposed in \cite{chatzipanagiotisEA15,chatzipanagiotisEA17} require the use of a stepsize, the value of which is critical to the methods' convergence (see also \S\ref{sec:discussion} for further discussion).
Finally, the recent work in \cite{houskaEA16} considers problem form~\eqref{problem_blocks_noy}, and allows for nonconvex objective and constraints.
In contrast with the present work, the method proposed in \cite{houskaEA16} is a nontrivial extension of ADMM.
These modifications (improved derivative information and a linesearch) are interesting and provide insight into ways that the robustness and convergence of ADMM might be improved.
However, the goal of this work is different;
the aim is to prove convergence properties of a standard form of ADMM with no modification.

\textbf{Contribution and structure.}
The goals and contributions of this work are to analyze a standard form of ADMM and establish that it converges even when applied to a fairly general, nonconvex problem.
Furthermore, the assumptions in this analysis aim to be as permissive as possible when it comes to the solution of the ADMM subproblem at each iteration
(see problem~\eqref{ADMMSP} in the statement of the method in the following section).
Specifically, the method does not specify how the subproblem must be solved, or how the nonconvex constraints 
must be handled.
Since local optimal solutions are permitted, this allows for very powerful numerical methods for the solution of general nonlinear optimization problems to be used.
For instance, \cite{rodriguezEA18} performs numerical studies of ADMM and other methods in the nonconvex setting.
Their numerical implementation uses the powerful optimization solver Ipopt%
\footnote{The Ipopt project page and source code is available at \url{https://github.com/coin-or/Ipopt}.}
\cite{wachterEA06} and achieve promising results for the performance of ADMM in certain settings.
They also point out that progressive hedging in the stochastic programming literature is an application of ADMM.
Thus the present work provides a theoretical basis for the convergence of progressive hedging in the nonconvex case.

The rest of this work is organized as follows.
The method is stated in \S\ref{sec:statement}.
A preliminary analysis in \S\ref{sec:prelim} defines relevant quantities and establishes some properties that are useful in \S\ref{sec:localConv}, which contains the main local convergence result, Theorem~\ref{thm:convergence}.
Section~\ref{sec:discussion} discusses the analysis and how it connects with other work, and finally a few examples are worked out in \S\ref{sec:examples}.

\section{Statement and preliminaries}
\label{sec:statement}

\subsection{Problem statement, notation}
\label{sec:prob_statement}
The problem of interest is the optimization problem
\begin{align}
\label{problem}
\tag{P}
\min_{\x,\y}\; & f(\x) \\
\st
\notag & \c(\x) = \0, \\
\notag & \mbf{A}\x + \mbf{B}\y = \b
\end{align}
where
$f : \mbb{R}^{n} \to \mbb{R}$,
$\mbf{c} : \mbb{R}^{n} \to \mbb{R}^{p}$,
$\mbf{A} \in \mbb{R}^{q \times n}$, 
$\mbf{B} \in \mbb{R}^{q \times m}$, and
$\b \in \mbb{R}^q$.
No assumption is made that $f$ is convex or $\c$ affine.
Instead, we will require problem regularity in the form of a local minimizer satisfying the second order sufficient conditions.

Notation includes $\0$, which may denote a vector of zeros or a matrix of zeros;
whether it is a vector or matrix should be clear from context.
Otherwise, bold uppercase letters denote matrices (or matrix-valued mappings), while bold lowercase letters denote vectors (or vector-valued mappings).
The Jacobian matrix of the vector-valued mapping $\c$ evaluated at $\x$ is denoted $\grad \c(\x) \tr$
(where superscript $\mathrm{T}$ denotes the transpose).
Further, $\norm{\cdot}$ denotes the standard (Euclidean) 2-norm;
different norms will be defined as needed and distinguished with a subscript.
$N_{\epsilon} (\z) = \set{ \z' : \norm{\z' - \z} < \epsilon}$
denotes an open ball.
A sequence $(x^0, x^1, x^2 ,\dots)$ may be abbreviated $\seq[k \in \mbb{N}]{x^k}$ or just $\seq[k]{x^k}$.
We treat the case of only equality constraints to avoid extra notational burden;
as far as the theory is concerned, any inequalities $h(\x) \le 0$ can be transformed by adding free variables $s$ and writing the constraint as 
$h(\x) + s^2 = 0$
(see for instance the discussion in \cite[\S3.3.2]{bertsekas_nlp}).

Compared with other problem forms typically considered in the study of ADMM, Problem~\eqref{problem} is missing an extra function of $\y$ in its objective.
This omission permits an explicit algebraic form for the $\y$ iterate update in ADMM and simplifies some analysis.
Further, we can ``absorb'' an objective function of $\y$ into the function of $\x$ at the expense of extra variables and constraints:
\[
\begin{aligned}
\min_{\x,\y}\; & f(\x) + g(\y)\\
\st
& \mbf{A}\x + \mbf{B}\y = \b
\end{aligned}
\iff
\begin{aligned}
\min_{\x,\x^y,\y}\; & (f(\x) + g(\x^y))\\
\st
& \mbf{A}\x + \mbf{B}\y = \b, \\
& \x^y = \y.
\end{aligned}
\]
Problem~\eqref{problem} affords a lot of flexibility, and motivation for this form is apparent when we consider that $\x$ may be naturally partitioned into nearly independent ``blocks'' of variables;
for instance, the problem form considered in \cite{rodriguezEA18} is
\begin{align}
\label{problem_blocks}
\min_{\x_1, \x_2, \dots, \x_N,\y} & \sum_{i=1}^N f_i(\x_i) \\
\st
\notag & \c_i(\x_i) = \0, \forall i, \\
\notag & \mbf{A}_i \x_i + \mbf{B}_i \y = \b_i, \forall i.
\end{align}
This problem fits into the form of Problem~\eqref{problem} by setting
$\x = (\x_1, \dots, \x_N)$,
$f : \x \mapsto \sum_i f_i(\x_i)$,
$\c : \x \mapsto (\c_1(\x_1), \dots, \c_N(\x_N))$, 
$\mbf{A} = \diag_i(\mbf{A}_i)$, 
$\mbf{B} = \begin{bmatrix} \mbf{B}_1\tr & \dots & \mbf{B}_N\tr \end{bmatrix}\tr$, and
$\b = (\b_1, \dots, \b_N)$.
This form motivates the inclusion of the variables $\y$, which may be viewed as complicating variables.
For instance, in the setting of two-stage stochastic programming, $\y$ takes the role of first-stage decisions.

Another problem form, considered in \cite{chatzipanagiotisEA17,houskaEA16} is
\begin{align}
\label{problem_blocks_noy}
\min_{\x_1, \x_2, \dots, \x_N} & \smallsum_i f_i(\x_i) \\
\st
\notag & \c_i(\x_i) = \0, \forall i, \\
\notag & \smallsum_i \mbf{A}_i \x_i = \tilde\b.
\end{align}
This problem can also be put into the form of Problem~\eqref{problem} by setting
$\x = (\x_1, \dots, \x_N)$,
$f : \x \mapsto \sum_i f_i(\x_i)$,
$\c : \x \mapsto (\c_1(\x_1), \dots, \c_N(\x_N))$, 
\[
\mbf{A} = 
\begin{bmatrix}
\mbf{A}_1 	& 			& \\
          	& \ddots 	& \\
			& 			& \mbf{A}_N \\
\0			& \dots  	& \0
\end{bmatrix},
\qquad
\mbf{B} = 
\begin{bmatrix}
-\mbf{I} 	& 			& \\
          	& \ddots 	& \\
			& 			& -\mbf{I} \\
\mbf{I}		& \dots  	& \mbf{I}
\end{bmatrix},
\text{ and}
\qquad
\b = 
\begin{bmatrix}
\0 \\
\vdots \\
\0 \\
\tilde\b
\end{bmatrix}.
\]

\subsection{Method statement}
Algorithm~\ref{alg:ADMM} states ADMM as it would apply to Problem~\eqref{problem}.
This is the method we will analyze.
Its form is consistent with ADMM considered in many previous studies, including \cite{boydEA11,hongEA16,magnussonEA15,rodriguezEA18}.
Note that solution of the subproblem~\eqref{ADMMSP}, the most computationally intensive step, may be decomposed (and parallelized) in the situation that the problem is derived from Problem~\eqref{problem_blocks} or \eqref{problem_blocks_noy}.
The quantities $\q^k$ and $\r^k$ defined in the method are called the primal and dual residuals, respectively.
Their interpretation as such will be justified shortly.
Inputs to the algorithm include the tolerances $\eta^p$, $\eta^d$, and the algorithm terminates once the norms of the residuals are less than these tolerances.
Other inputs include initial guesses $\y^0$ and $\blam^0$ for the $\y$ variables and multipliers of the linear constraints, respectively, as well as the penalty parameter $\rho$ of the augmented Lagrangian.

\begin{algorithm}[t]
\caption{ADMM for Problem~\eqref{problem}}
\label{alg:ADMM}
\begin{algorithmic}
\REQUIRE $\y^0 \in\mbb{R}^m$, $\blam^0\in\mbb{R}^q$, 
			$\rho > 0$, $\eta^p > 0$, $\eta^d > 0$, 
			such that $\mbf{B}\tr \blam^0 = \0$
\FOR{$k \in \set{0,1,2,\dots}$}
	\STATE
	Find a local minimum $\x^{k+1}$ of
	\begin{align}
	\label{ADMMSP}
	\tag{SP}
	\min_{\x} \;
	& f(\x) + 
		(\blam^k) \tr      (\mbf{A}\x + \mbf{B}\y^k - \b) + 
		\frac{\rho}{2}\norm{\mbf{A}\x + \mbf{B}\y^k - \b}^2 \\
	\st
	& \c(\x) = \0 \notag,
	\end{align}
	\STATE
	Let:
	\STATE
	$\y^{k+1} \in 
	\arg \min_{\y} \set{
	(\blam^k)\tr       (\mbf{A}\x^{k+1} +\mbf{B}\y - \b) + 
	\frac{\rho}{2}\norm{\mbf{A}\x^{k+1} +\mbf{B}\y - \b}^2 }
	$
	\STATE
	$\blam^{k+1} = \blam^k + \rho(\mbf{A}\x^{k+1} + \mbf{B}\y^{k+1} - \b)$,
	\STATE
	$\q^{k+1} = \mbf{A}\x^{k+1} + \mbf{B}\y^{k+1} - \b$,
	\STATE
	$\r^{k+1} = \rho\mbf{A}\tr\mbf{B}(\y^{k+1} - \y^k)$,
	\IF{$\norm{\q^{k+1}} \le \eta^p$ \AND $\norm{\r^{k+1}} \le \eta^d$}
	\STATE Terminate.
	\ENDIF
\ENDFOR
\end{algorithmic}
\end{algorithm}

\subsection{Assumptions}
\label{sec:assm}
Here we collect all the assumptions used in this work for easy reference.
However, they will be referenced as necessary in the statement of results to make it clear when they are used.

The first is a basic regularity assumption of the minimizer of the subproblem~\eqref{ADMMSP} found at each iteration.
%
\begin{assumption}
\label{assm:KKTiterates}
Assume that
$f$ and $\c$ are continuously differentiable, and
for all iterations $k$, a KKT point $(\x^{k+1},\bmu^{k+1})$ of subproblem~\eqref{ADMMSP} is found:
\begin{subequations}
\label{spKKT}
\begin{align}
\label{spKKT1} 
&\grad f(\x^{k+1}) + 
	\grad \c(\x^{k+1}) \bmu^{k+1} + 
	\mbf{A} \tr (\blam^k + \rho(\mbf{A}\x^{k+1} +\mbf{B}\y^k - \b)) = \0, \\
\label{spKKT2} 
&\c(\x^{k+1}) = \0.
\end{align}
\end{subequations}
\end{assumption}

The next assumption is a statement that the overall problem has a local minimizer $(\x^*,\y^*)$ which satisfies a constraint qualification and the second-order sufficiency conditions.
Although not stated explicitly, under the conditions in this assumption, the point $(\x^*,\y^*)$ must be a local minimum by, for instance, \cite[Proposition~3.2.1]{bertsekas_nlp}.
In the following we use the notation $c_{j}$, meaning the $j^{th}$ component of $\c$, and similarly for $\mu_{j}$.
\begin{assumption}
\label{assm:OverallRegularity}
Assume that $f$ and $\c$ are twice continuously differentiable and that $(\x^*, \y^*, \bmu^*, \blam^*)$ is a KKT point of Problem~\eqref{problem}:
\begin{subequations}
\label{mainKKT}
\begin{align}
\label{mainKKTx} 
&\grad f(\x^*) + \grad \c(\x^*) \bmu^* + \mbf{A} \tr \blam^* = \0, \\
\label{mainKKTy}
&\mbf{B} \tr \blam^* = \0, \\
\label{mainKKTfeas1} 
&\c(\x^*) = \0, \\
\label{mainKKTfeas2} 
&\mbf{A}\x^* + \mbf{B}\y^* = \b.
\end{align}
\end{subequations}
Assume that the linear independence constraint qualification holds at $(\x^*,\y^*)$: 
the rows of the matrix 
\[
\mbf{C} = 
\begin{bmatrix}
\grad \c(\x^*) \tr & \0 \\
\mbf{A}				& \mbf{B}
\end{bmatrix}
\]
are linearly independent.
Furthermore, let
$\mbf{H}_{xx} : (\x,\bmu) \mapsto \grad^2 f(\x)  + \smallsum_j \mu_j \grad^2 c_j(\x)$
and assume that
\[
\z \tr \begin{bmatrix} \mbf{H}_{xx}(\x^*,\bmu^*) & \0 \\ \0 & \0 \end{bmatrix} \z > 0
\]
for all $\z \neq \0$ satisfying 
$\mbf{C}\z = \0$.
\end{assumption}

Note that the positive definiteness of the Hessian on the null space of the constraint Jacobian implies that $\mbf{B}$ needs to have full column rank%
\footnote{Let $\z_y \neq \0$ and consider $\z = (\0,\z_y)$.
The only way that the positive definiteness condition can hold is if $\mbf{B}\z_y \neq \0$.
This implies that $\mbf{B}$ must have full column rank.
}.

Since we are allowing local minimization of the subproblem \eqref{ADMMSP} in the algorithm, the following assumption aims to resolve which local minimizer of the subproblem is found, without assuming explicitly that it is found in a specific neighborhood.
This is done by assuming that the minimizer of the subproblem that is \emph{closest} to the desired solution $\x^*$ is found at each iteration.
This is similar to assumptions made in the local convergence analysis of the classic method of multipliers (see \cite[\S2.2.4]{bertsekas_lmm}).
Further, this assumption holds if the \emph{subproblem} \eqref{ADMMSP} has a unique local minimizer/multiplier pair.
Note that in some cases, the subproblem may have a unique local minimizer, even though the original problem has multiple local minima.
See also \S\ref{sec:discussion} for further discussion.
\begin{assumption}
\label{assm:nearestSolution}
For all iterations $k$, 
$(\x^{k+1},\bmu^{k+1})$ 
is the local minimizer/multiplier for subproblem~\eqref{ADMMSP} that is closest (in 2-norm distance) to
$(\x^*,\bmu^*)$.
\end{assumption}

To help clarify some discussion, the following assumption states regularity conditions on the subproblem similar to the overall conditions in Assumption~\ref{assm:OverallRegularity}.
In Appendix~\ref{app:RegularityRelation}, it is shown that Assumption~\ref{assm:OverallRegularity} implies Assumption~\ref{assm:SPRegularity}, given that $\rho$ is sufficiently large.
Consequently, we will not need the following assumption explicitly in the main result; however it is useful in some of the supporting results.
\begin{assumption}
\label{assm:SPRegularity}
Assume that
$f$ and $\c$ are twice continuously differentiable and that
$(\x^*, \y^*, \bmu^*, \blam^*)$ is a KKT point of Problem~\eqref{problem}.
Assume that
$(\x^*, \bmu^*)$ 
is a KKT point of Subproblem~\eqref{ADMMSP} for 
$\y^k = \y^*$ and $\blam^k = \blam^*$.
%
Assume that $\x^*$ satisfies the linear independence constraint qualification:
the vectors 
$\set{\grad c_j(\x^*) : j \in \set{1,\dots,p} }$
are linearly independent.
Assume that the second order sufficient conditions hold:
\[
\z \tr 
\Big( \grad^2 f(\x^*)  + \smallsum_j \mu_{j}^* \grad^2 c_{j}(\x^*) + \rho \mbf{A} \tr \mbf{A} \Big) 
\z > 0
\]
for all $\z$ satisfying
$\z \neq \0$, 
$\grad c_{j}(\x^*) \tr \z = 0$ for all $j$.
\end{assumption}

\subsection{Preliminary analysis}
\label{sec:prelim}
The main convergence analysis is given in the following section.
This section includes some definitions and preliminary analysis that is useful for the convergence result.

Under Assumption~\ref{assm:KKTiterates}, we have that for each $k$,
there exists $\bmu^{k+1}$  such that 
$(\x^{k+1},\bmu^{k+1})$
is a KKT point of subproblem~\eqref{ADMMSP}.
%
Rearranging and adding $\rho \mbf{A} \tr \mbf{B}\y^{k+1}$ to both sides of Equation~\eqref{spKKT1} of the subproblem KKT conditions gives 
\begin{equation}
\notag
\grad f(\x^{k+1})
	+ \grad \c(\x^{k+1}) \bmu^{k+1}
	+ \mbf{A} \tr (\blam^k + \rho (\mbf{A}\x^{k+1} +\mbf{B}\y^{k+1} - \b))
	= \rho \mbf{A} \tr \mbf{B}(\y^{k+1} - \y^k).
\end{equation}
Using the update formulas for $\blam^{k+1}$ and the dual residual $\r^{k+1}$ from Algorithm~\ref{alg:ADMM}, we have for all $k$
\begin{equation}
\label{eq:toGetKKT}
\grad f(\x^{k+1})
	+ \grad \c(\x^{k+1}) \bmu^{k+1}
	+ \mbf{A} \tr \blam^{k+1}
	= \r^{k+1}.
\end{equation}
Comparing the above with Equation~\eqref{mainKKTx}, we see that the gradient with respect to $\x$ of the Lagrangian of Problem~\eqref{problem} evaluated at  
$(\x^{k+1},\bmu^{k+1},\blam^{k+1})$
is $\r^{k+1}$,
which makes its definition as the dual residual appropriate.
Further, it is clear that $\q^{k+1}$ equals the violation of the feasibility condition~\eqref{mainKKTfeas2}.
Finally, note that as
$(\x^{k+1},\y^{k+1},\bmu^{k+1},\blam^{k+1})$
approaches a KKT point
$(\x^*,\y^*,\bmu^*,\blam^*)$,
$\q^{k+1}$ goes to zero, and by continuous differentiability under Assumption~\ref{assm:KKTiterates}, $\r^{k+1}$ goes to zero as well. 

Meanwhile, note that we may solve analytically for $\y^{k+1}$.
First, note that the problem for $\y^{k+1}$ is convex, so there is no need to distinguish between whether we find a local or global minimizer.
The first order optimality conditions are
\begin{equation}
\label{eq:y_stationarity}
	\mbf{B}\tr \blam^k + \rho\mbf{B}\tr(\mbf{A}\x^{k+1} +\mbf{B}\y^{k+1} - \b) = \0.
\end{equation}
Using the update rule for $\blam^{k+1}$, this means
\[ 
\mbf{B} \tr \blam^{k+1} = \0.
\]
This helps explain the requirement that $\mbf{B} \tr \blam^0 = \0$;
we can then assume that for all $k$, $\mbf{B} \tr \blam^k = \0$.
Then assuming that $\mbf{B}$ has full column rank (as would hold under Assumption~\ref{assm:OverallRegularity}), $\mbf{B}\tr\mbf{B}$ is invertible, and we can transform Equation~\eqref{eq:y_stationarity} to obtain
\begin{equation}
\label{eq:y_update}
 \y^{k+1} = (\mbf{B}\tr\mbf{B})\inv \mbf{B}\tr(\b - \mbf{A}\x^{k+1}).
\end{equation}
This formula also helps us interpret $\y^{k+1}$ as the least-squares solution to satisfying the constraints $\mbf{A}\x^{k+1} + \mbf{B}\y = \b$.

\section{Convergence analysis}
\label{sec:localConv}

In this section we explore the local convergence of Algorithm~\ref{alg:ADMM}.
The first subsection builds a number of results required for the main convergence result, and in so doing outlines the analytical approach taken.
The second subsection states and proves the main local convergence result, Theorem~\ref{thm:convergence}, as well as a corollary.

\subsection{Supporting results}

A key observation that enables the analysis is that $(\x^{k+1},\y^{k+1},\bmu^{k+1},\blam^{k+1})$ can be identified with a local optimal solution of a perturbed version of Problem~\eqref{problem}, Problem~\eqref{ProbFamily} below,
when $(\r,\q) = (\r^{k+1},\q^{k+1})$;
compare with, for instance, Equation~\eqref{eq:toGetKKT}.
Using sensitivity analysis, we can show that $(\x^{k+1},\y^{k+1},\bmu^{k+1},\blam^{k+1})$ approaches the optimal value $(\x^*,\y^*,\bmu^*,\blam^*)$ as the residuals go to zero.
Consequently, we use an appropriate Lyapunov function to show that the residuals do indeed go to zero.
While some of this analysis is similar in overall structure to the basic convergence result for ADMM given in \cite[Appendix A]{boydEA11}, the introduction of the perturbed problem~\eqref{ProbFamily} is a novel technical detail required to adapt it to the nonconvex case.

We start by analyzing the perturbed version of Problem~\eqref{problem}.
The main goal of the following lemma is to derive properties of the resulting ``primal functional.''
\begin{lemma}
\label{lem:primalProperties}
Let Assumption~\ref{assm:OverallRegularity} hold.
Consider the family of problems parameterized by 
$(\r,\q) \in \mbb{R}^n \times \mbb{R}^q$:
\begin{equation}
\label{ProbFamily}
\min_{\x, \y}
\set{ 
f(\x) - \r \tr \x : 
\c(\x) = \0, \;
\mbf{A}\x  + \mbf{B}\y = \b + \q
}.
\end{equation}
\begin{enumerate} \itemsep0pt \parskip0pt \parsep0pt
\item
There exists a positive constant $\epsilon_1$ and continuously differentiable functions 
$\hat\x$, $\hat\y$, $\hat\bmu$, $\hat\blam$
such that for all $\r$, $\q$ with $\norm{(\r,\q)} < \epsilon_1$,
$
(\hat\x(\r,\q), \hat\y(\r,\q), \hat\bmu(\r,\q), \hat\blam(\r,\q))
$
is a KKT point of \eqref{ProbFamily}.
It holds that 
$(\hat\x(\0,\0), \hat\y(\0,\0), \hat\bmu(\0,\0), \hat\blam(\0,\0)) = (\x^*,\y^*,\bmu^*,\blam^*)$.
Furthermore, 
$(\hat\x, \hat\y, \hat\bmu, \hat\blam)$
is unique, in the sense that there exists $\epsilon_1' > 0$ such that if
$(\tilde{\x},\tilde{\y},\tilde{\bmu},\tilde{\blam})$
is a KKT point of \eqref{ProbFamily} and
$\norm{(\tilde{\x},\tilde{\y},\tilde{\bmu},\tilde{\blam}) - (\x^*,\y^*,\bmu^*,\blam^*)} < \epsilon_1'$, 
then 
$(\tilde{\x},\tilde{\y},\tilde{\bmu},\tilde{\blam}) = 
(\hat\x(\r,\q), \hat\y(\r,\q), \hat\bmu(\r,\q), \hat\blam(\r,\q))$.
\item
There exist positive constants $\epsilon_2$ and $\epsilon_2'$ such that 
for all $(\r,\q)$ with $\norm{(\r,\q)} < \epsilon_2$, 
we have that $(\hat\x(\r,\q),\hat\y(\r,\q))$ is a local minimizer of \eqref{ProbFamily}, on a neighborhood with radius $\epsilon_2'$.
%
\item
There exist positive constants $\rho'$ and $\epsilon_3$ such that for all $(\r,\q)$ with $\norm{(\r,\q)} < \epsilon_3$ and $\rho > \rho'$, we have
\[
f(\x^*)          - \r \tr \x^* \ge 
f(\hat\x(\r,\q)) - \r \tr \hat\x(\r,\q)
	- \frac{\rho}{8}\norm{\q}^2 + (\hat{\blam}(\r,\q))\tr \q.
\]
\end{enumerate}
\end{lemma}
\begin{proof}~
\begin{enumerate}
\item 
This claim is a fairly standard sensitivity result; 
see for instance \cite[Thm.~5.1]{fiaccoEA90} or \cite[Thm.~2.1]{fiacco76}.
The specific claim of uniqueness is not often stated so explicitly, however it follows from the implicit function theorem upon which the result is based
(see for instance \S1.2 of \cite{bertsekas_lmm} or \cite[Prop.~A.25]{bertsekas_nlp}).
\item
%
This claim is a statement that there is a ``smallest'' neighborhood on which
$(\hat\x(\r,\q),\hat\y(\r,\q))$ 
is a local minimum, for all sufficiently small $(\r,\q)$.
Let the fully augmented Lagrangian of \eqref{ProbFamily} 
(at the optimal multipliers $(\hat\bmu,\hat\blam)$) be
\[
\begin{aligned}
{L}_{\rho,\r,\q} : 
	(\x,\y) \mapsto  
	f(\x) - \r\tr\x
	&+ \hat\bmu(\r,\q) \tr \c(\x)
	 + \frac{\rho}{2}\norm{\c(\x)}^2 \\
	&+ \hat\blam(\r,\q) \tr (\mbf{A}\x+\mbf{B}\y-\b-\q)
	 + \frac{\rho}{2}\norm{\mbf{A}\x+\mbf{B}\y-\b-\q}^2.
\end{aligned}
\]
Recall the definition of $\mbf{H}_{xx}$ from Assumption~\ref{assm:OverallRegularity} as the Hessian of the Lagrangian of Problem~\eqref{problem}.
Then the Hessian of the augmented Lagrangian $L_{\rho,\r,\q}$ above is given by%
\footnote{The gradient of the augmented Lagrangian is given by
\[
\begin{aligned}
&\grad_x {L}_{\rho,\r,\q}(\x,\y) = 
	   \grad f(\x) - \r
	 + \grad \c(\x) \hat\bmu(\r,\q) 
	 + \rho \grad \c(\x) \c(\x)
	 + \mbf{A}\tr \hat\blam(\r,\q)
	 + \rho \mbf{A}\tr (\mbf{A}\x + \mbf{B}\y - \b -\q), \\
&\grad_y {L}_{\rho,\r,\q}(\x,\y) = 
	  \mbf{B}\tr \hat\blam(\r,\q)
	+ \rho \mbf{B}\tr (\mbf{A}\x + \mbf{B}\y - \b -\q).
\end{aligned}
\]}
\[
\begin{aligned}
\grad^2_{xx} {L}_{\rho,\r,\q}(\x) &=
	\mbf{H}_{xx}(\x,\hat\bmu(\r,\q)) 
	+ \smallsum_j \rho c_j(\x) \grad^2 c_j(\x)
	+ \rho \grad \c(\x) \grad \c(\x) \tr
	+ \rho \mbf{A}\tr \mbf{A}, \\
\grad^2_{yy} {L}_{\rho,\r,\q}(\x) &=
	\rho \mbf{B}\tr\mbf{B}, \\
\grad^2_{yx} L_{\rho,\r,\q}(\x) &=
	\rho \mbf{B} \tr \mbf{A}
	= \grad^2_{xy} L_{\rho,\r,\q}(\x) \tr.
\end{aligned}
\]
Then define
\[
\mbf{H}_{\rho} : (\x,\r,\q) \mapsto 
\grad^2 L_{\rho,\r,\q}(\x) = 
\begin{bmatrix}
\grad^2_{xx} {L}_{\rho,\r,\q}(\x) & \grad^2_{xy} {L}_{\rho,\r,\q}(\x) \\ 
\grad^2_{yx} {L}_{\rho,\r,\q}(\x) & \grad^2_{yy} {L}_{\rho,\r,\q}(\x)
\end{bmatrix}
\]
where we highlight its functional dependence on $(\r,\q)$.
Note that $\mbf{H}_{\rho}$ is continuous with respect to $(\x,\r,\q)$ under Assumption~\ref{assm:OverallRegularity}, since the defining functions are twice continuously differentiable and $\hat\bmu$ is continuous.
Since $\c(\x^*) = \0$, note that 
\[
\mbf{H}_{\rho}(\x^*,\0,\0) = 
	\begin{bmatrix}
	\mbf{H}_{xx}(\x^*,\bmu^*) & \0 \\
	\0 & \0
	\end{bmatrix}
	 + 
	\rho \mbf{C} \tr \mbf{C}
\]
recalling that $\mbf{C}$ is the Jacobian of the equality constraints at $\x^*$.
Under the second-order sufficient conditions of Assumption~\ref{assm:OverallRegularity}, 
$\mbf{H}_{\rho}(\x^*,\0,\0)$
is positive definite for some $\rho > 0$, by, for instance, \cite[Lemma~3.2.1]{bertsekas_nlp}.
Combined with the continuity of $\hat\x$ and $\mbf{H}_{\rho}$, 
we can choose $\epsilon_p > 0$ such that $\mbf{H}_{\rho}(\hat\x(\r,\q),\r,\q)$ is positive definite for all $(\r,\q)$ such that 
$\norm{(\r,\q)} < \epsilon_p$.
Since $(\hat\x,\hat\y,\hat\bmu,\hat\blam)$ is a KKT point for problem~\eqref{ProbFamily}, we note that
$\grad L_{\rho,\r,\q} (\hat\x(\r,\q),\hat\y(\r,\q)) = \0$
for all $(\r,\q)$. 
Consequently, we can apply Lemma~\ref{lem:MinimizerRadius} in Appendix~\ref{sec:technicalLemma} to see that there exist positive $\epsilon_2$ and $\epsilon_2'$ such that 
for all $(\r,\q)$ with $\norm{(\r,\q)} < \epsilon_2$,
we have
$(\hat\x(\r,\q),\hat\y(\r,\q))$
is a minimizer of $L_{\rho,\r,\q}$ on the neighborhood 
$\set{ (\x,\y) : \norm{(\x,\y) - (\hat\x(\r,\q),\hat\y(\r,\q))} < \epsilon_2' }$.

Finally, for all $\x,\y$ such that 
$\mbf{A}\x + \mbf{B}\y = \b + \q$ and 
$\c(\x) = \0$,
$L_{\rho,\r,\q}(\x,\y) = f(\x) - \r \tr \x$,
and so it follows that $(\hat\x(\r,\q),\hat\y(\r,\q))$ is a local minimizer of Problem~\eqref{ProbFamily} on an $\epsilon_2'$-neighborhood, for all $(\r,\q)$ such that $\norm{(\r,\q)} < \epsilon_2$.

\item
This claim uses the fact that the ``penalized primal functional'' is convex.
Let $p$ be the primal functional (the optimal objective value) of \eqref{ProbFamily};
i.e., it is defined by 
$p : (\r,\q) \mapsto f(\hat\x(\r,\q)) - \r \tr \hat\x(\r,\q)$.
Again, from standard sensitivity analysis we have that the gradient of $p$ with respect to $\q$ is
$\grad_{q} p(\r,\q) = -\hat\blam(\r,\q)$
(see \cite[Proposition~3.3.3]{bertsekas_nlp}).
Since $\hat\blam$ is continuously differentiable, we have that $\grad_{qq}^2 p$ is continuous.
By \cite[Lemma 3.2.1]{bertsekas_nlp}, we can choose $\bar\rho > 0$ so that
\[
\grad_{qq}^2 p(\0,\0) + \bar\rho \mbf{I} \symgt \0.
\]
Let 
$S_{\bar\rho} = \set{ (\r,\q) : \grad_{qq}^2 p(\r,\q) + \bar\rho \mbf{I} \symgt \0 }$
which, by the continuity of $\grad_{qq}^2 p$, is open and contains $(\0,\0)$.
If 
$\grad_{qq}^2 p(\r,\q) + \bar\rho \mbf{I} \symgt \0$, then 
$\grad_{qq}^2 p(\r,\q) +     \rho \mbf{I} \symgt \0$ for any $\rho > \bar\rho$.
Thus $S_{\bar\rho} \subset S_{\rho}$ for all $\rho > \bar\rho$.
Thus we can choose $\epsilon_3 > 0$ so that 
$\grad_{qq}^2 p(\r,\q) + \sfrac{\rho}{4} \mbf{I}$ 
is positive definite for all $(\r,\q)$ and $\rho$ such that $\norm{(\r,\q)} < \epsilon_3$ and $\rho > 4\bar\rho$.
It follows that for all sufficiently small $\r$ and sufficiently large $\rho$,
$\q \mapsto p(\r,\q) + \frac{\rho}{8}\norm{\q}^2$
is convex on the set of $\q$ such that $\norm{(\r,\q)} < \epsilon_3$.

Next, the gradient of 
$\q \mapsto p(\r,\q) + \frac{\rho}{8}\norm{\q}^2$ 
is
$-\hat{\blam}(\r,\q) + \frac{\rho}{4} \q$.
For a convex function, a gradient is a subgradient and so
\[
p(\r,\mbf{0}) + \frac{\rho}{8}\norm{\mbf{0}}^2 \ge 
p(\r,\q) +      \frac{\rho}{8}\norm{\q}^2 + (-\hat{\blam}(\r,\q) + \frac{\rho}{4} \q) \tr (\mbf{0}-\q)
\]
and so for all $\q$ such that $\norm{(\r,\q)} < \epsilon_3$,
\begin{align*}
p(\r,\mbf{0}) 
&\ge p(\r,\q) + \frac{\rho}{8}\norm{\q}^2 - \frac{\rho}{4} \norm{\q}^2 + \hat{\blam}(\r,\q) \tr \q \\
&=   p(\r,\q) - \frac{\rho}{8}\norm{\q}^2 + (\hat{\blam}(\r,\q))\tr \q.
\end{align*}
Then, using part 2, take $\r$ small enough that we have 
$\norm{(\r,\0)} < \epsilon_2$ and
$\norm{(\hat\x(\r,\0),\hat\y(\r,\0)) - (\x^*,\y^*)} < \epsilon_2'$.
Then $(\x^*,\y^*)$ is feasible in \eqref{ProbFamily} (for $\q = \0$) and in the neighborhood on which $(\hat\x(\r,\0),\hat\y(\r,\0))$ is a minimizer, and so $(\x^*,\y^*)$ must have greater or equal objective value.
Combining this with the inequality above yields the claim, defining $\rho' = 4\bar\rho$ and $\epsilon_3$ as necessary.
\end{enumerate}
\end{proof}

Next we show two inequalities, which provide bounds on the difference between the objective value  at the solution $(\x^*,\y^*)$ and iterates of the algorithm.

\begin{lemma}
\label{lem:step2}
Let Assumptions~\ref{assm:KKTiterates} and \ref{assm:OverallRegularity} hold.
There exist positive $\epsilon$ and $\rho'$ such that,
if
$\rho > \rho'$
and
$\norm{(\x^{k+1},\y^{k+1},\bmu^{k+1},\blam^{k+1}) - (\x^*,\y^*,\bmu^*,\blam^*)} < \epsilon$,
then
\begin{equation}
\label{ineq:2}
f(\x^{k+1}) - f(\x^*)
	\le 
  (\r^{k+1}) \tr (\x^{k+1} - \x^*)
- (\blam^{k+1}) \tr \q^{k+1}
+ \frac{\rho}{8}\norm{\q^{k+1}}^2.
\end{equation}
\end{lemma}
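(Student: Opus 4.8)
The plan is to recognize the iterate $(\x^{k+1},\y^{k+1},\bmu^{k+1},\blam^{k+1})$ as the parameterized KKT point $\hat{\mbs{\theta}}(\r^{k+1},\q^{k+1})$ of the perturbed problem~\eqref{ProbFamily}, and then read off Inequality~\eqref{ineq:2} directly from part~3 of Lemma~\ref{lem:primalProperties}. In outline, the objective-difference bound is nothing more than the convexity-of-the-primal-functional estimate of part~3, transported back to the iterate through this identification.

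First I would check that the iterate solves the KKT system of \eqref{ProbFamily} at the parameter value $(\r,\q) = (\r^{k+1},\q^{k+1})$. The $\x$-stationarity condition of \eqref{ProbFamily}, namely $\grad f(\x) - \r + \grad\c(\x)\bmu + \mbf{A}\tr\blam = \0$, is exactly Equation~\eqref{eq:toGetKKT} rearranged, with $\r^{k+1}$ playing the role of the objective perturbation; the $\y$-stationarity $\mbf{B}\tr\blam^{k+1} = \0$ was established in the preliminary analysis; the constraint $\c(\x^{k+1}) = \0$ is \eqref{spKKT2}; and $\mbf{A}\x^{k+1} + \mbf{B}\y^{k+1} = \b + \q^{k+1}$ holds by the definition of $\q^{k+1}$. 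Invoking the uniqueness clause of part~1 of Lemma~\ref{lem:primalProperties}, provided the iterate lies within $\epsilon_1'$ of $(\x^*,\y^*,\bmu^*,\blam^*)$ it must coincide with the sensitivity branch, giving the identity used in Proposition~\ref{prop:almostConvergence}:
\begin{equation}
\label{eq:ParameterizedIterate}
(\x^{k+1},\y^{k+1},\bmu^{k+1},\blam^{k+1}) = \big( \hat\x(\r^{k+1},\q^{k+1}), \hat\y(\r^{k+1},\q^{k+1}), \hat\bmu(\r^{k+1},\q^{k+1}), \hat\blam(\r^{k+1},\q^{k+1}) \big).
\end{equation}
With this in hand I would apply part~3 of Lemma~\ref{lem:primalProperties} at $(\r,\q) = (\r^{k+1},\q^{k+1})$: for $\rho > \rho'$ and $\norm{(\r^{k+1},\q^{k+1})} < \epsilon_3$, after substituting \eqref{eq:ParameterizedIterate} it reads
\[
f(\x^*) - (\r^{k+1})\tr\x^* \ge f(\x^{k+1}) - (\r^{k+1})\tr\x^{k+1} - \frac{\rho}{8}\norm{\q^{k+1}}^2 + (\blam^{k+1})\tr\q^{k+1},
\]
and rearranging this single inequality yields \eqref{ineq:2} immediately.

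The remaining point — and the one needing the most care — is to guarantee the smallness hypotheses that let both the uniqueness step and part~3 fire. The key observation is that both residuals are continuous functions of the iterate that vanish at the optimum: by \eqref{eq:toGetKKT}, $\r^{k+1} = \grad f(\x^{k+1}) + \grad\c(\x^{k+1})\bmu^{k+1} + \mbf{A}\tr\blam^{k+1}$, which equals $\0$ at $(\x^*,\bmu^*,\blam^*)$ by \eqref{mainKKTx}, while $\q^{k+1} = \mbf{A}\x^{k+1} + \mbf{B}\y^{k+1} - \b$ vanishes at the optimum by \eqref{mainKKTfeas2}. Hence, by continuity, I can shrink $\epsilon$ so that $\norm{(\x^{k+1},\y^{k+1},\bmu^{k+1},\blam^{k+1}) - (\x^*,\y^*,\bmu^*,\blam^*)} < \epsilon$ simultaneously forces the iterate within $\epsilon_1'$ (so the uniqueness step applies) and forces $\norm{(\r^{k+1},\q^{k+1})} < \min\{\epsilon_1,\epsilon_3\}$ (so the sensitivity functions are defined and part~3 holds), and I would take $\rho'$ to be the value supplied by part~3. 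I expect the main obstacle to be not any hard estimate but the conceptual identification \eqref{eq:ParameterizedIterate}: verifying that the iterate is a KKT point of the perturbed problem and then pinning it to the unique sensitivity branch. Once that is in place, the bound \eqref{ineq:2} is pure substitution into the inequality of part~3.
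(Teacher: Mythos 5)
Your proposal is correct and follows essentially the same route as the paper's own proof: identify the iterate as a KKT point of the perturbed problem~\eqref{ProbFamily}, pin it to the sensitivity branch $\hat{\mbs{\theta}}(\r^{k+1},\q^{k+1})$ via the uniqueness clause of part~1 of Lemma~\ref{lem:primalProperties}, apply part~3, and shrink $\epsilon$ using the continuity of the residuals in the iterate. Your explicit handling of the smallness hypotheses (taking $\norm{(\r^{k+1},\q^{k+1})} < \min\{\epsilon_1,\epsilon_3\}$ and the iterate within $\epsilon_1'$) is, if anything, slightly more careful than the paper's phrasing of the same step.
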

\begin{proof}
From Equation~\eqref{eq:toGetKKT}, we have
\[
\grad f(\x^{k+1})
	+ \grad \c(\x^{k+1}) \bmu^{k+1}
	+ \mbf{A} \tr \blam^{k+1}
	- \r^{k+1} 
= \0.
\]
Combined with
$\c(\x^{k+1}) = \0$ and 
$\mbf{B}\tr \blam^{k+1} = \0$,
this implies that 
$(\x^{k+1},\y^{k+1}, \bmu^{k+1}, \blam^{k+1})$
is a KKT point of 
\begin{equation}
\label{someProb}
\min \set{ 
f(\x) - (\r^{k+1})\tr \x : 
	\c(\x) = \0,
	\mbf{A}\x + \mbf{B}\y = \b + \q^{k+1}
}.
\end{equation}

However, by Lemma~\ref{lem:primalProperties}, we know that for
$\r^{k+1}$ and $\q^{k+1}$ sufficiently close to zero,
Problem~\eqref{someProb} has a KKT point 
\[
(	\hat\x(\r^{k+1},\q^{k+1}), \hat\y(\r^{k+1},\q^{k+1}), 
	\hat\bmu(\r^{k+1},\q^{k+1}), \hat\blam(\r^{k+1},\q^{k+1}) )
\]
which is unique in a neighborhood of 
$(\x^*,\y^*,\bmu^*,\blam^*)$.
Consequently, for 
$(\x^{k+1},\y^{k+1}, \bmu^{k+1},\blam^{k+1})$
sufficiently close to 
$(\x^*,\y^*,\bmu^*,\blam^*)$,
we can conclude from part~1 of Lemma~\ref{lem:primalProperties} that in fact
\begin{equation}
\label{eq:ParameterizedIterate}
(\x^{k+1}, \y^{k+1}, \bmu^{k+1},\blam^{k+1}) = 
(	\hat\x(\r^{k+1},\q^{k+1}), \hat\y(\r^{k+1},\q^{k+1}), 
	\hat\bmu(\r^{k+1},\q^{k+1}), \hat\blam(\r^{k+1},\q^{k+1}) ).
\end{equation}
Thus, from part 3 of Lemma~\ref{lem:primalProperties}, there exist positive $\rho'$ and $\epsilon_3$ such that for $\rho > \rho'$ and $\norm{(\r^{k+1},\q^{k+1})} < \epsilon_3$,
\begin{equation*}
f(\x^{*}) -   (\r^{k+1})\tr \x^{*} \ge
f(\x^{k+1}) - (\r^{k+1})\tr \x^{k+1}
	 - \frac{\rho}{8}\norm{\q^{k+1}}^2 + (\blam^{k+1})\tr \q^{k+1}.
\end{equation*}
Rearranging yields the desired inequality.
Noting that $\r^{k+1}$ and $\q^{k+1}$ go to zero as 
$(\x^{k+1},\y^{k+1},\bmu^{k+1},\blam^{k+1})$ 
approaches the optimal value,
we can take $\epsilon$ sufficiently small to ensure 
$\norm{(\r^{k+1},\q^{k+1})} < \epsilon_3$.
This yields the result.
\end{proof}

\begin{lemma}
\label{lem:step1}
Let Assumption~\ref{assm:OverallRegularity} hold.
There exist positive $\rho''$ and $\epsilon$ such that,
if $\rho > \rho''$ and 
$\norm{(\x^{k+1},\y^{k+1}) - (\x^*,\y^*)} < \epsilon$,
then
\begin{equation}
\label{ineq:1}
f(\x^*) - f(\x^{k+1}) \le (\blam^*) \tr \q^{k+1} + \frac{\rho}{8}\norm{\q^{k+1}}^2.
\end{equation}
\end{lemma}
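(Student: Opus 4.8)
The plan is to compare $f(\x^*)$ with $f(\x^{k+1})$ via an augmented Lagrangian of Problem~\eqref{problem} evaluated at the optimal multipliers $(\bmu^*,\blam^*)$, but with the penalty parameter deliberately reduced to $\rho/4$ so that the quadratic penalty contributes precisely the target coefficient $\frac{\rho}{8}$. Concretely, define
\[
\bar{L}(\x,\y) =
f(\x) + (\bmu^*)\tr\c(\x) + (\blam^*)\tr(\mbf{A}\x+\mbf{B}\y-\b)
+ \frac{\rho}{8}\norm{\c(\x)}^2 + \frac{\rho}{8}\norm{\mbf{A}\x+\mbf{B}\y-\b}^2,
\]
which is exactly $L_{\rho/4,\0,\0}$ in the notation of Lemma~\ref{lem:primalProperties}. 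Inequality~\eqref{ineq:1} will then follow from the single inequality $\bar{L}(\x^*,\y^*) \le \bar{L}(\x^{k+1},\y^{k+1})$, i.e.\ from local minimality of $(\x^*,\y^*)$.

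First I would establish that $(\x^*,\y^*)$ is a strict local minimizer of $\bar{L}$ once $\rho$ is large enough. The KKT conditions of Assumption~\ref{assm:OverallRegularity}, Equations~\eqref{mainKKTx}--\eqref{mainKKTfeas2}, give $\grad\bar{L}(\x^*,\y^*)=\0$: feasibility $\c(\x^*)=\0$ and $\mbf{A}\x^*+\mbf{B}\y^*=\b$ make the penalty gradients vanish, leaving exactly the stationarity residuals~\eqref{mainKKTx} and~\eqref{mainKKTy}. Repeating the Hessian computation of part~2 of Lemma~\ref{lem:primalProperties} with penalty $\rho/4$ gives
\[
\grad^2\bar{L}(\x^*,\y^*) =
\begin{bmatrix} \mbf{H}_{xx}(\x^*,\bmu^*) & \0 \\ \0 & \0 \end{bmatrix}
+ \frac{\rho}{4}\,\mbf{C}\tr\mbf{C},
\]
and under the second-order sufficient condition of Assumption~\ref{assm:OverallRegularity} this is positive definite once $\rho/4$ exceeds a threshold, by \cite[Lemma~3.2.1]{bertsekas_nlp}; call the corresponding bound on $\rho$ the constant $\rho''$. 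Since $\bar{L}$ is $C^2$ near $(\x^*,\y^*)$, a stationary point with positive definite Hessian is a strict local minimizer, so there is an $\epsilon>0$ with $\bar{L}(\x^*,\y^*)\le\bar{L}(\x,\y)$ whenever $\norm{(\x,\y)-(\x^*,\y^*)}<\epsilon$.

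It then remains to evaluate $\bar{L}$ at the two points. At $(\x^*,\y^*)$ all constraint terms vanish, so $\bar{L}(\x^*,\y^*)=f(\x^*)$. At $(\x^{k+1},\y^{k+1})$ the subproblem feasibility $\c(\x^{k+1})=\0$ annihilates the $\c$-terms, while $\mbf{A}\x^{k+1}+\mbf{B}\y^{k+1}-\b=\q^{k+1}$ by definition of the primal residual, giving $\bar{L}(\x^{k+1},\y^{k+1}) = f(\x^{k+1}) + (\blam^*)\tr\q^{k+1} + \frac{\rho}{8}\norm{\q^{k+1}}^2$. Applying the local-minimality inequality with $(\x,\y)=(\x^{k+1},\y^{k+1})$ --- valid because $\norm{(\x^{k+1},\y^{k+1})-(\x^*,\y^*)}<\epsilon$ by hypothesis --- and rearranging yields exactly~\eqref{ineq:1}.

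The argument is short, and the one point requiring care is the choice of the reduced penalty $\rho/4$: it must be small enough that the penalty term produces the coefficient $\frac{\rho}{8}$ demanded by the statement, yet large enough that $\grad^2\bar{L}(\x^*,\y^*)$ remains positive definite. This tension is exactly what forces the conclusion to hold only for $\rho$ above some $\rho''$, and it is resolved simply by absorbing the factor of four into the threshold supplied by \cite[Lemma~3.2.1]{bertsekas_nlp}. I would also note that, in contrast to the preceding lemmas, only Assumption~\ref{assm:OverallRegularity} is needed here: the feasibility $\c(\x^{k+1})=\0$ used in the evaluation step is guaranteed directly by the algorithm, since $\x^{k+1}$ is a feasible local minimum of subproblem~\eqref{ADMMSP}, rather than requiring the KKT Assumption~\ref{assm:KKTiterates}.
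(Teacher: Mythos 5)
Your construction is essentially the paper's own: the paper also compares $f(\x^*)$ with $f(\x^{k+1})$ through an augmented Lagrangian at the optimal multipliers,
\[
L_{\rho}(\x,\y) = f(\x) + (\bmu^*)\tr\c(\x) + \frac{\rho}{2}\norm{\c(\x)}^2
+ (\blam^*)\tr(\mbf{A}\x+\mbf{B}\y-\b) + \frac{\rho}{8}\norm{\mbf{A}\x+\mbf{B}\y-\b}^2,
\]
with the deliberate $\sfrac{1}{8}$ coefficient on the linear-constraint penalty, and then uses stationarity, positive definiteness of the Hessian for large $\rho$, local minimality of $(\x^*,\y^*)$, and evaluation at the two points exactly as you do. The only cosmetic difference is that you scale the $\c$-penalty down to $\frac{\rho}{8}$ as well (your $\bar{L}$ is $L_{\rho/4,\0,\0}$ with a uniform penalty), which is immaterial since $\c$ vanishes at both evaluation points; your side remark that only Assumption~\ref{assm:OverallRegularity} is needed, with $\c(\x^{k+1})=\0$ coming from feasibility of the subproblem minimizer, also matches the lemma as stated.

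There is, however, one genuine gap: the uniformity of $\epsilon$ in $\rho$. The lemma's quantifier order is ``there exist $\rho''$ \emph{and} $\epsilon$ such that for all $\rho>\rho''$\dots,'' so a single $\epsilon$ must serve every admissible $\rho$. Your step ``a stationary point with positive definite Hessian is a strict local minimizer, so there is an $\epsilon>0$'' produces a radius that a priori depends on $\rho$, because $\bar{L}$ itself (and in particular the variation of $\grad^2\bar{L}$ away from $(\x^*,\y^*)$, which involves $\rho$-scaled terms such as $\frac{\rho}{4}\smallsum_j c_j(\x)\grad^2 c_j(\x)$) changes with $\rho$; as written you have only shown ``for each $\rho>\rho''$ there exists $\epsilon(\rho)$.'' The paper flags precisely this point in a footnote to its proof, arguing that the radius of the minimizing neighborhood is non-decreasing in $\rho$. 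The cleanest repair in your setup is a one-line monotonicity argument: fix $\rho_0>\rho''$ and obtain $\epsilon$ for $\bar{L}$ at penalty level $\rho_0$; then for any $\rho\ge\rho_0$,
\[
\bar{L}_{\rho}(\x,\y) = \bar{L}_{\rho_0}(\x,\y)
+ \frac{\rho-\rho_0}{8}\left( \norm{\c(\x)}^2 + \norm{\mbf{A}\x+\mbf{B}\y-\b}^2 \right),
\]
where the added term is nonnegative everywhere and zero at the feasible point $(\x^*,\y^*)$, so $\bar{L}_{\rho}(\x^*,\y^*)=\bar{L}_{\rho_0}(\x^*,\y^*)\le\bar{L}_{\rho_0}(\x,\y)\le\bar{L}_{\rho}(\x,\y)$ on the \emph{same} $\epsilon$-ball. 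With that insertion your proof is complete and coincides with the paper's.
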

\begin{proof}
This follows from arguments about the augmented Lagrangian, similarly to the proof of Lemma~\ref{lem:primalProperties}, part 2.
Let 
\[
L_{\rho} : (\x,\y) \mapsto 
	f(\x) + 
	(\bmu^*) \tr \c(\x) 	     			+ \frac{\rho}{2}\norm{\c(\x)}^2 + 
	(\blam^*)\tr (\mbf{A}\x+\mbf{B}\y-\b)	+ \frac{\rho}{8}\norm{\mbf{A}\x+\mbf{B}\y-\b}^2
\]
(the factor of $\sfrac{1}{8}$ on the penalty term is deliberate, and will be used later).
Then by the KKT necessary conditions for Problem~\eqref{problem},
$\grad L_{\rho} (\x^*,\y^*) = \0$,
and by the second order sufficient conditions, for $\rho$ sufficiently large,  
$\grad^2 L_{\rho} (\x^*,\y^*)$
is positive definite.
Similarly to the proof of \cite[Prop.~1.1.3]{bertsekas_nlp},
$(\x^*,\y^*)$ is a minimizer of $L_{\rho}$ on some neighborhood, and the radius of this neighborhood is independent%
\footnote{The radius of this neighborhood depends on the minimum eigenvalue of 
$\grad^2 L_{\rho} (\x^*,\y^*)$.
In particular, the radius is non-decreasing as this minimum eigenvalue increases.
While $\grad^2 L_{\rho} (\x^*,\y^*)$ does depend on the value of $\rho$, the minimum eigenvalue can only increase with increasing $\rho$;
compare with the expression for  $\mbf{H}_{\rho}$ in the proof of Lemma~\ref{lem:primalProperties}, part~2.
Thus, the radius of the neighborhood on which $\x^*$ is a minimizer is independent of $\rho$, as long as $\rho$ is above the critical value.}
of $\rho$.
Consequently, there exist positive constants $\epsilon$ and $\rho''$ 
such that for $\norm{(\x,\y) - (\x^*,\y^*)} < \epsilon$ and $\rho > \rho''$, we have
\[
L_{\rho}(\x^*,\y^*) \le 
L_{\rho}(\x,\y).
\]
Since $\mbf{A}\x^* +\mbf{B}\y^* - \b = \c(\x^*) = \0$, we have
$L_{\rho}(\x^*,\y^*) = f(\x^*)$.
Finally, since $\x^{k+1}$ satisfies $\c(\x^{k+1}) = \0$, it holds that
\[
f(\x^*) \le
f(\x^{k+1}) 
+ (\blam^*)\tr(\mbf{A}\x^{k+1} + \mbf{B}\y^{k+1} - \b) 
+ \frac{\rho}{8} \norm{\mbf{A}\x^{k+1} + \mbf{B}\y^{k+1} - \b}^2.
\]
Rearranging the above and using the definition of the primal residual yields the desired inequality.
\end{proof}

Similar to the convergence proof in \cite[Appendix A]{boydEA11} and \cite{chatzipanagiotisEA17}, we define a Lyapunov function
\begin{equation}
\label{eq:lyapunov}
V^k = \frac{1}{\rho}\norm{\blam^k - 	\blam^*}^2 + \rho\norm{\mbf{B}(\y^k - \y^*)}^2.
\end{equation}
Using Lemmata~\ref{lem:step2} and \ref{lem:step1}, the following proposition asserts the existence of a neighborhood around the optimal point 
$(\x^*,\y^*,\bmu^*,\blam^*)$
so that if the iterates of the algorithm fall in this neighborhood, then we obtain a bound on the decrease in the Lyapunov function.

\begin{proposition}
\label{prop:LyapunovDecreaseBound}
Let Assumptions~\ref{assm:KKTiterates} and \ref{assm:OverallRegularity} hold.
There exist positive constants $\epsilon$ and $\rho^*$ such that,
if
$\rho > \rho^*$ and
$\norm{(\x^{k+1},\y^{k+1},\bmu^{k+1},\blam^{k+1}) - (\x^*,\y^*,\bmu^*,\blam^*)} < \epsilon$,
for some $k$,
then it holds that
\begin{equation}
\label{ineq:LyapunovDecreaseBound}
0 \le 
V^k - V^{k+1}
-	\rho\norm{\mbf{B}(\y^{k} - \y^{k+1})}^2
-	\frac{\rho}{2}\norm{\q^{k+1}}^2.
\end{equation}
\end{proposition}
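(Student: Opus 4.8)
The plan is to reduce \eqref{ineq:LyapunovDecreaseBound} to a single quadratic-form inequality in $\Delta\x:=\x^{k+1}-\x^*$ and then to establish that inequality through a perturbed Debreu-type lemma, using the penalty term to render the Hessian positive semidefinite on the relevant subspace. First I would expand the right-hand side of \eqref{ineq:LyapunovDecreaseBound}. Writing $\norm{\a}^2-\norm{\b}^2=(\a-\b)\tr(\a+\b)$, using the update $\blam^{k+1}=\blam^k+\rho\q^{k+1}$ (so $\blam^k-\blam^{k+1}=-\rho\q^{k+1}$), and collecting terms, the right-hand side of \eqref{ineq:LyapunovDecreaseBound} equals
\[
-2(\q^{k+1})\tr(\blam^{k+1}-\blam^*) + \frac{\rho}{2}\norm{\q^{k+1}}^2 + 2\rho\bigl(\mbf B(\y^k-\y^{k+1})\bigr)\tr\mbf B(\y^{k+1}-\y^*),
\]
so it suffices to show this expression is nonnegative.

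Next I would exploit stationarity and the least-squares structure of the $\y$-update. Since $\mbf A\x^*+\mbf B\y^*=\b$ by \eqref{mainKKTfeas2}, we have $\q^{k+1}=\mbf A\Delta\x+\mbf B(\y^{k+1}-\y^*)$; since $\mbf B\tr\blam^{k+1}=\mbf B\tr\blam^*=\0$, the $\mbf B$-part drops and $(\q^{k+1})\tr(\blam^{k+1}-\blam^*)=\Delta\x\tr\mbf A\tr(\blam^{k+1}-\blam^*)$. Subtracting \eqref{mainKKTx} from \eqref{eq:toGetKKT} gives $\mbf A\tr(\blam^{k+1}-\blam^*)=\r^{k+1}-\bigl(\g^{k+1}-\g^*\bigr)$, where $\g^{j}:=\grad f(\x^{j})+\grad\c(\x^{j})\bmu^{j}$ is the $\x$-gradient of the Lagrangian. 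The key structural fact comes from \eqref{eq:y_update}: because $\y^*$ is also the least-squares solution (its residual vanishes by \eqref{mainKKTfeas2}), $\y^{k+1}-\y^*=-(\mbf B\tr\mbf B)\inv\mbf B\tr\mbf A\Delta\x$, whence $\q^{k+1}=(\mbf I-\mbf P)\mbf A\Delta\x$ with $\mbf P=\mbf B(\mbf B\tr\mbf B)\inv\mbf B\tr$ the orthogonal projector onto the range of $\mbf B$. Thus $\q^{k+1}$ is orthogonal to the range of $\mbf B$, which contains $\mbf B(\y^{k+1}-\y^k)$. Substituting $\r^{k+1}=\rho\mbf A\tr\mbf B(\y^{k+1}-\y^k)$ and $\mbf A\Delta\x=\q^{k+1}-\mbf B(\y^{k+1}-\y^*)$ into $-2\Delta\x\tr\r^{k+1}$ and invoking this orthogonality, the residual term reduces to one that exactly cancels the $\y$-difference term above; all $\y$ cross terms collapse, leaving the need to show $2\Delta\x\tr(\g^{k+1}-\g^*)+\frac{\rho}{2}\norm{\q^{k+1}}^2\ge 0$. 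Finally, provided $\epsilon$ is small enough that $[\x^*,\x^{k+1}]$ lies in the $C^2$-neighborhood of Assumption~\ref{assm:OverallRegularity}, the fundamental theorem of calculus gives $\g^{k+1}-\g^*=\bar{\mbf H}\Delta\x+\bar{\mbf G}\Delta\bmu$ with $\Delta\bmu:=\bmu^{k+1}-\bmu^*$, $\bar{\mbf H}=\int_0^1\mbf H_{xx}(\x^*+t\Delta\x,\bmu^*+t\Delta\bmu)\,dt$, and $\bar{\mbf G}=\int_0^1\grad\c(\x^*+t\Delta\x)\,dt$; since $\c(\x^{k+1})=\c(\x^*)=\0$, we have $\bar{\mbf G}\tr\Delta\x=\c(\x^{k+1})-\c(\x^*)=\0$ \emph{exactly}, so the multiplier term vanishes and the target becomes
\[
2\Delta\x\tr\bar{\mbf H}\Delta\x + \frac{\rho}{2}\norm{(\mbf I-\mbf P)\mbf A\Delta\x}^2\ge 0,\qquad\text{with}\quad\bar{\mbf G}\tr\Delta\x=\0.
\]

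The remaining inequality I would obtain from a perturbed Debreu-type lemma. The second-order condition in Assumption~\ref{assm:OverallRegularity}, together with the full column rank of $\mbf B$, implies that $\mbf H_{xx}(\x^*,\bmu^*)$ is positive definite on $\mathcal N=\set{\mbf w:\grad\c(\x^*)\tr\mbf w=\0,\ (\mbf I-\mbf P)\mbf A\mbf w=\0}$: for such $\mbf w\neq\0$ the vector $(\mbf w,-(\mbf B\tr\mbf B)\inv\mbf B\tr\mbf A\mbf w)$ is a nonzero element of $\ker\mbf C$. A compactness argument then yields constants $\rho^*,\epsilon_0>0$ such that
\[
\mbf w\tr\tilde{\mbf H}\mbf w+\frac{\rho}{2}\norm{(\mbf I-\mbf P)\mbf A\mbf w}^2\ge 0
\]
for all $\rho\ge\rho^*$, all symmetric $\tilde{\mbf H}$ within $\epsilon_0$ of $2\mbf H_{xx}(\x^*,\bmu^*)$, all $\tilde{\mbf G}$ within $\epsilon_0$ of $\grad\c(\x^*)\tr$, and all $\mbf w\in\ker\tilde{\mbf G}$: were this false, there would be sequences $\rho_i\to\infty$, $\tilde{\mbf H}_i\to 2\mbf H_{xx}(\x^*,\bmu^*)$, $\tilde{\mbf G}_i\to\grad\c(\x^*)\tr$ and unit vectors $\mbf w_i\in\ker\tilde{\mbf G}_i$ with negative form, forcing $\norm{(\mbf I-\mbf P)\mbf A\mbf w_i}\to 0$ and any limit point $\bar{\mbf w}$ to be a unit vector of $\mathcal N$ with $\bar{\mbf w}\tr\mbf H_{xx}(\x^*,\bmu^*)\bar{\mbf w}\le 0$, a contradiction. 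I would then choose $\epsilon$ small enough (by uniform continuity of $\grad\c$, $\grad^2 f$, and $\grad^2 c_j$ near $\x^*$ together with $\norm{\Delta\x},\norm{\Delta\bmu}<\epsilon$) that $2\bar{\mbf H}$ and $\bar{\mbf G}\tr$ lie within $\epsilon_0$ of their nominal values, and apply the lemma with $\tilde{\mbf H}=2\bar{\mbf H}$, $\tilde{\mbf G}=\bar{\mbf G}\tr$, and $\mbf w=\Delta\x$ (which lies in $\ker\bar{\mbf G}\tr$ exactly).

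The main obstacle is arranging the quantifiers in this lemma so that a single $\epsilon$ works for all arbitrarily large $\rho>\rho^*$. Decomposing $\Delta\x$ relative to $\ker(\grad\c(\x^*)\tr)$ would leave a component of size $O(\epsilon)\norm{\Delta\x}$ whose contribution to the penalty term is scaled by $\rho$ and hence cannot be absorbed. The remedy is to perturb the constraint matrix in the lemma itself, so that $\Delta\x$ sits exactly in $\ker\bar{\mbf G}\tr$, eliminating the $\rho$-scaled error; monotonicity of the penalized form in $\rho$ then reduces the uniform-in-$\rho$ claim to the single value $\rho^*$. The remaining steps are careful bookkeeping, the one genuine observation being that the primal residual $\q^{k+1}$ lies in the orthogonal complement of the range of $\mbf B$, which is what collapses all of the $\y$ cross terms.
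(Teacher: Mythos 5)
Your proposal is correct, and it takes a genuinely different route from the paper's proof. The paper never reduces \eqref{ineq:LyapunovDecreaseBound} to a quadratic form in $\x^{k+1}-\x^*$; instead it compares \emph{objective values}. Via an implicit-function-theorem sensitivity result (Lemma~\ref{lem:primalProperties}) it identifies $(\x^{k+1},\y^{k+1},\bmu^{k+1},\blam^{k+1})$ with the locally unique KKT point $\hat{\mbs\theta}(\r^{k+1},\q^{k+1})$ of the perturbed family~\eqref{ProbFamily}; it then shows the penalized primal functional $\q \mapsto p(\r,\q)+\frac{\rho}{8}\norm{\q}^2$ is locally convex, yielding an upper bound on $f(\x^{k+1})-f(\x^*)$ (Lemma~\ref{lem:step2}), and separately bounds $f(\x^*)-f(\x^{k+1})$ using local optimality of an augmented Lagrangian with penalty coefficient $\rho/8$ (Lemma~\ref{lem:step1}). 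Adding the two inequalities gives exactly your intermediate inequality $0\le 2(\x^{k+1}-\x^*)\tr\r^{k+1}+2(\blam^*-\blam^{k+1})\tr\q^{k+1}+\frac{\rho}{2}\norm{\q^{k+1}}^2$, after which the telescoping algebra is the same as yours (with the orthogonality $\mbf B\tr\q^{k+1}=\0$ appearing implicitly through Formula~\eqref{eq:y_update}). You obtain that intermediate inequality directly and more cheaply: subtracting \eqref{mainKKTx} from \eqref{eq:toGetKKT} converts $(\q^{k+1})\tr(\blam^{k+1}-\blam^*)$ into Lagrangian-gradient differences so that the $\r^{k+1}$ terms cancel identically; the fundamental theorem of calculus plus the exact feasibility $\c(\x^{k+1})=\c(\x^*)=\0$ annihilates the multiplier term; and a perturbed Debreu-type compactness lemma finishes the job — your decision to perturb the constraint matrix inside the lemma, so that $\x^{k+1}-\x^*$ lies exactly in $\ker\bar{\mbf G}\tr$ and no $\rho$-scaled projection error arises, is precisely the right fix for the quantifier issue you identify, and your monotonicity-in-$\rho$ reduction is sound. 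Your route is more elementary and self-contained: no sensitivity theory, no local uniqueness of KKT points, and it only uses closeness of $(\x^{k+1},\bmu^{k+1})$ to $(\x^*,\bmu^*)$, so it actually proves the proposition under a weaker hypothesis than stated. What the paper's heavier machinery buys is reuse: the parameterization $\hat{\mbs\theta}$ and the identification~\eqref{eq:ParameterizedIterate} are invoked again in Proposition~\ref{prop:almostConvergence} to conclude convergence of the iterates once the residuals vanish, so your argument shortens this proposition but would not eliminate Lemma~\ref{lem:primalProperties} from the paper as a whole.
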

\begin{proof}
Defining $\rho^*$ and $\epsilon$ as necessary, we can apply Lemmata~\ref{lem:step2} and \ref{lem:step1}.
Upon adding Inequalities~\eqref{ineq:2} and \eqref{ineq:1} and multiplying by two, we obtain
\begin{equation}
\label{ineq:Intermediate1}
0 \le
  2(\x^{k+1} - \x^*)   \tr \r^{k+1}
+ 2(\blam^* - \blam^{k+1}) \tr \q^{k+1}
+ 2\frac{\rho}{4}\norm{\q^{k+1}}^2.
\end{equation}

Using the definition of the dual residual, the first term in \eqref{ineq:Intermediate1} is
\[
\begin{aligned}
2(\x^{k+1} - \x^*)   \tr \r^{k+1}
&=  2(\x^{k+1} - \x^*)   \tr (\rho\mbf{A}\tr\mbf{B}(\y^{k+1} - \y^k)) \\
&=  2\rho (\y^{k+1} - \y^k)\tr \mbf{B}\tr (\mbf{A}\x^{k+1} - \mbf{A}\x^*) \\
&=  2\rho (\y^{k+1} - \y^k)\tr (\mbf{B}\tr\mbf{B}) (\y^* - \y^{k+1})
\end{aligned}
\]
where we have used Formula~\eqref{eq:y_update} to see that
$\mbf{B}\tr\mbf{B} \y^{k+1} = \mbf{B}\tr(\b - \mbf{A}\x^{k+1})$, and noting that
$\mbf{A}\x^* + \mbf{B}\y^* = \b$, and so 
$\mbf{B}\tr\mbf{B} \y^{*} = \mbf{B}\tr(\b - \mbf{A}\x^{*})$
(and recall that Assumption~\ref{assm:OverallRegularity} implies that $\mbf{B}$ has full column rank, validating the use of Formula~\eqref{eq:y_update}).
Then note that
\[
2\rho (\y^{k+1} - \y^k)\tr (\mbf{B}\tr\mbf{B}) (\y^* - \y^{k+1})
=
  \rho\norm{\mbf{B}(\y^k - \y^*)}^2 
- \rho\norm{\mbf{B}(\y^{k+1} - \y^*)}^2 
- \rho\norm{\mbf{B}(\y^{k} - \y^{k+1})}^2,
\]
which is seen after expanding out both sides.

Using 
$\q^{k+1} = \frac{1}{\rho}(\blam^{k+1} - \blam^k)$,
the second term in the right-hand side of \eqref{ineq:Intermediate1} is
\begin{equation*}
2(\blam^* - \blam^{k+1}) \tr \q^{k+1}
= \frac{2}{\rho}(\blam^* - \blam^{k+1})\tr(\blam^{k+1} - \blam^{k}).
\end{equation*}
Then note that
\begin{align*}
\frac{2}{\rho}(\blam^* - \blam^{k+1})\tr(\blam^{k+1} - \blam^{k})
=	\frac{1}{\rho}\norm{\blam^k - 		\blam^*}^2
 - \frac{1}{\rho}\norm{\blam^{k+1} - 	\blam^*}^2 
 - \frac{1}{\rho}\norm{\blam^{k+1} - 	\blam^k}^2,
\end{align*}
which, again, is seen after expanding out both sides.
Then using $\blam^{k+1} - \blam^k = \rho \q^{k+1}$ in the right-hand side of the expression above, we can combine with the third term in the right-hand side of \eqref{ineq:Intermediate1} so that
\[
2(\blam^* - \blam^{k+1}) \tr \q^{k+1}
+ 2\frac{\rho}{4}\norm{\q^{k+1}}^2
=
	\frac{1}{\rho}\norm{\blam^k - 		\blam^*}^2
 - \frac{1}{\rho}\norm{\blam^{k+1} - 	\blam^*}^2 
 - \frac{\rho}{2}\norm{\q^{k+1}}^2.
\]

Consequently, Inequality~\eqref{ineq:Intermediate1} becomes
\begin{align}
\notag
0 \le &
	\left( \frac{1}{\rho}\norm{\blam^k - 	\blam^*}^2 + 	\rho\norm{\mbf{B}(\y^k - \y^*)}^2 \right)
-	\left( \frac{1}{\rho}\norm{\blam^{k+1}-\blam^*}^2 +	\rho\norm{\mbf{B}(\y^{k+1} - \y^*)}^2 \right)
\\
& \notag
-	\rho\norm{\mbf{B}(\y^{k} - \y^{k+1})}^2
-	\frac{\rho}{2}\norm{\q^{k+1}}^2,
\end{align}
which is the desired conclusion.
\end{proof}

We iterate Inequality~\eqref{ineq:LyapunovDecreaseBound} to show that the residuals converge to zero.

\begin{proposition}
\label{prop:PrelimLocalConv}
Let Assumptions~\ref{assm:KKTiterates} and \ref{assm:OverallRegularity} hold.
There exist positive constants $\epsilon$ and $\rho^*$ such that,
if 
$\rho > \rho^*$
and 
$\norm{(\x^{k+1},\y^{k+1},\bmu^{k+1},\blam^{k+1}) - (\x^*,\y^*,\bmu^*,\blam^*)} < \epsilon$,
for all sufficiently large $k$,
then the sequence
\[
\seq[k \in \mbb{N}]{ \rho\norm{\mbf{B}(\y^{k} - \y^{k+1})}^2 +	\frac{\rho}{2}\norm{\q^{k+1}}^2 }
\]
converges to zero.
\end{proposition}
\begin{proof}
From Proposition~\ref{prop:LyapunovDecreaseBound}, we know that there exist constants $\epsilon$ and $\rho^*$ such that,
if
$\rho > \rho^*$
and 
$\norm{(\x^{k+1},\y^{k+1},\bmu^{k+1},\blam^{k+1}) - (\x^*,\y^*,\bmu^*,\blam^*)} < \epsilon$,
then Inequality~\eqref{ineq:LyapunovDecreaseBound} holds.
Thus, if the conditions hold for all sufficiently large $k$, then there exists $K$ such that
\[
V^{k+1}
+	\rho \norm{\mbf{B}(\y^{k} - \y^{k+1})}^2
+	\frac{\rho}{2} \norm{\q^{k+1}}^2 
\le
V^k,
\]
for all $k \ge K$.
Iterating we get
\[
V^{K+\ell} + 
\sum_{k = K}^{K + \ell - 1} \left( \rho \norm{\mbf{B}(\y^{k} - \y^{k+1})}^2 + \frac{\rho}{2} \norm{\q^{k+1}}^2 \right)
\le
V^K
\]
for all $\ell \ge 1$.
The partial sums in this expression are bounded above and increasing, 
since $V^{K+\ell}$ is always nonnegative, $V^K$ is finite, and the terms in the sum above are always nonnegative.
It follows that the partial sums converge as $\ell \to \infty$, and thus that
$\seq[k]{ \rho\norm{\mbf{B}(\y^{k} - \y^{k+1})}^2 +	\frac{\rho}{2}\norm{\q^{k+1}}^2 }$
converges to zero.
\end{proof}


The following preliminary convergence result combines the observation from Lemma~\ref{lem:primalProperties} that the iterates $(\x^{k+1},\y^{k+1},\bmu^{k+1},\blam^{k+1})$ depend continuously on the residuals and the observation from  Proposition~\ref{prop:PrelimLocalConv} that the residuals converge to zero.
However, note that the result only really asserts the \emph{existence} of a neighborhood around the optimal point $(\x^*,\y^*,\bmu^*,\blam^*)$
at which convergence can occur.

\begin{proposition}
\label{prop:almostConvergence}
Let Assumptions~\ref{assm:KKTiterates} and \ref{assm:OverallRegularity} hold.
There exist positive constants $\epsilon$ and $\rho^*$ such that,
if 
$\rho > \rho^*$ 
and
$\norm{(\x^{k+1},\y^{k+1},\bmu^{k+1},\blam^{k+1}) - (\x^*,\y^*,\bmu^*,\blam^*)} < \epsilon$
for all sufficiently large $k$,
then the sequence of iterates produced by Algorithm~\ref{alg:ADMM}
$
\seq[k \in \mbb{N}]{(\x^{k+1},\y^{k+1},\bmu^{k+1},\blam^{k+1})}
$
converges to $(\x^*,\y^*,\bmu^*,\blam^*)$.
\end{proposition}
\begin{proof}
By Proposition~\ref{prop:PrelimLocalConv}, we have 
$\seq[k]{ \rho \norm{\mbf{B}(\y^{k} - \y^{k+1})}^2 }$ and
$\seq[k]{ \frac{\rho}{2}\norm{\q^{k+1}}^2  }$
converging to zero;
this implies $\seq[k]{ \r^{k+1} }$ converges to zero and clearly that $\seq[k]{ \q^{k+1} }$ converges to zero.
Then, using Lemmata~\ref{lem:primalProperties} and \ref{lem:step2}, specifically Equation~\eqref{eq:ParameterizedIterate}, we note that 
$(\x^{k+1},\y^{k+1},\bmu^{k+1},\blam^{k+1})$ equals 
\[
\big( \hat\x(\r^{k+1},\q^{k+1}), \hat\y(\r^{k+1},\q^{k+1}),
	  \hat\bmu(\r^{k+1},\q^{k+1}),\hat\blam(\r^{k+1},\q^{k+1}) \big),
\] 
where $(\hat\x,\hat\y,\hat\bmu,\hat\blam)$ is a continuous function equaling $(\x^*,\y^*,\bmu^*,\blam^*)$ at $(\0,\0)$.
Combined with the convergence of the residuals to zero, we have the result.
\end{proof}

To obtain a stronger result, we begin to invoke Assumption~\ref{assm:nearestSolution}.
The following result takes advantage of Assumption~\ref{assm:nearestSolution} to show that the iterates $(\x^{k+1},\bmu^{k+1})$ remain close to $(\x^*,\bmu^*)$ if $(\y^k,\blam^k)$ remains close to $(\y^*,\blam^*)$.
\begin{lemma}
\label{lem:continuousXMU}
Let Assumptions~\ref{assm:nearestSolution} and \ref{assm:SPRegularity} hold.
There exist continuous functions
$\x^+$, $\bmu^+$
defined on a neighborhood of $(\y^*,\blam^*)$
such that
\[\begin{aligned}
(\x^+(\y^k,\blam^k),\bmu^+(\y^k,\blam^k)) &= (\x^{k+1},\bmu^{k+1}), \\
(\x^+(\y^*,\blam^*),\bmu^+(\y^*,\blam^*)) &= (\x^*,\bmu^*).
\end{aligned}\]
%
\end{lemma}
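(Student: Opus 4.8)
The plan is to realize the pair $(\x^{k+1},\bmu^{k+1})$ as the output of the implicit function theorem applied to the subproblem KKT system~\eqref{spKKT}, regarding $(\y,\blam)$ as parameters and $(\x,\bmu)$ as the unknowns. Concretely, I would define
\[
\Phi(\x,\bmu;\y,\blam) =
\begin{bmatrix}
\grad f(\x) + \grad \c(\x)\bmu + \mbf{A}\tr\big(\blam + \rho(\mbf{A}\x + \mbf{B}\y - \b)\big) \\
\c(\x)
\end{bmatrix}.
\]
By Assumption~\ref{assm:SPRegularity} we have $\Phi(\x^*,\bmu^*;\y^*,\blam^*) = \0$, and $\Phi$ is continuously differentiable on a neighborhood of this base point since $f$ and $\c$ are twice continuously differentiable near $\x^*$. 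The key observation is that a zero of $\Phi(\cdot,\cdot;\y,\blam)$ is exactly a KKT point of subproblem~\eqref{ADMMSP} for that value of $(\y,\blam)$.

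Next I would verify the nonsingularity hypothesis of the implicit function theorem. The Jacobian of $\Phi$ with respect to $(\x,\bmu)$, evaluated at the base point, is the symmetric KKT (saddle-point) matrix
\[
\begin{bmatrix}
\mbf{H}_{xx}(\x^*,\bmu^*) + \rho\mbf{A}\tr\mbf{A} & \grad \c(\x^*) \\
\grad \c(\x^*)\tr & \0
\end{bmatrix}.
\]
Linear independence of $\set{\grad c_j(\x^*)}$ together with the second order condition of Assumption~\ref{assm:SPRegularity} (positivity of $\z\tr(\mbf{H}_{xx}(\x^*,\bmu^*) + \rho\mbf{A}\tr\mbf{A})\z$ for all $\z\neq\0$ with $\grad \c(\x^*)\tr\z = \0$) are precisely the conditions guaranteeing invertibility of this matrix; this is standard, following from the usual sensitivity analysis of nonlinear programs (see, e.g., \cite{bertsekas_lmm}). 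The implicit function theorem then supplies continuously differentiable functions $\x^+,\bmu^+$ on a neighborhood $M$ of $(\y^*,\blam^*)$ solving $\Phi(\x^+(\y,\blam),\bmu^+(\y,\blam);\y,\blam) = \0$, with $(\x^+(\y^*,\blam^*),\bmu^+(\y^*,\blam^*)) = (\x^*,\bmu^*)$, and — crucially — with \emph{local uniqueness}: there is an open ball $N$ of some radius $\delta$ around $(\x^*,\bmu^*)$ in which $(\x^+(\y,\blam),\bmu^+(\y,\blam))$ is the only zero of $\Phi(\cdot,\cdot;\y,\blam)$ for each $(\y,\blam)\in M$.

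It remains to identify this implicit-function solution with the iterate the algorithm actually produces, and this is where I expect the main difficulty to lie. Two points must be settled. First, the solution must be a genuine local minimizer/multiplier pair, not merely a KKT point; since the Hessian $\mbf{H}_{xx}(\x,\bmu) + \rho\mbf{A}\tr\mbf{A}$ depends continuously on $(\x,\bmu)$ and the constraint gradients depend continuously on $\x$, both the second order sufficient condition and the constraint qualification persist at $(\x^+(\y,\blam),\bmu^+(\y,\blam))$ for $(\y,\blam)$ near $(\y^*,\blam^*)$ (shrinking $M$ if necessary), so this pair is indeed a local minimizer/multiplier of the perturbed subproblem. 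Second, it must be the \emph{nearest} such pair to $(\x^*,\bmu^*)$, so that it coincides with $(\x^{k+1},\bmu^{k+1})$ under Assumption~\ref{assm:nearestSolution}. For this I would combine local uniqueness with continuity: by continuity of $\x^+,\bmu^+$, shrink $M$ so that $\norm{(\x^+(\y,\blam),\bmu^+(\y,\blam)) - (\x^*,\bmu^*)} < \delta$ for $(\y,\blam)\in M$; any competing local minimizer/multiplier pair for that parameter value, being a distinct zero of $\Phi$, must lie outside $N$ and hence at distance at least $\delta$ from $(\x^*,\bmu^*)$. Therefore the implicit-function solution is strictly the closest local minimizer/multiplier pair, and Assumption~\ref{assm:nearestSolution} forces $(\x^+(\y^k,\blam^k),\bmu^+(\y^k,\blam^k)) = (\x^{k+1},\bmu^{k+1})$ whenever $(\y^k,\blam^k)\in M$. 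The hardest part is precisely this final matching step, since it must rule out spurious nearby minimizers using only the local uniqueness furnished by the implicit function theorem together with the nearest-solution assumption.
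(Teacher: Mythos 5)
Your proposal is correct and takes essentially the same route as the paper: the paper's proof simply invokes the classical sensitivity theorems (\cite[Thm.~5.1]{fiaccoEA90} or \cite[Thm.~2.1]{fiacco76}), which are precisely the implicit-function-theorem argument on the subproblem KKT system that you carry out by hand, and then concludes from local uniqueness via Assumption~\ref{assm:nearestSolution} exactly as you do. Your explicit $\delta$-ball matching argument and your verification that the second-order and constraint-qualification conditions persist (so the implicit solution is a genuine local minimizer) merely unpack details that the cited theorems, and the paper's parenthetical ``because it is unique,'' leave implicit.
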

\begin{proof}
Under Assumption~\ref{assm:SPRegularity}, we can apply standard sensitivity analysis results, such as \cite[Thm.~5.1]{fiaccoEA90} or \cite[Thm.~2.1]{fiacco76}, to the subproblem.
We get that there exists $\delta > 0$ and continuously differentiable functions
$\x^+$, $\bmu^+$
defined on a neighborhood $N_{\delta}(\y^*,\blam^*)$ such that
$(\x^+(\y^k,\blam^k),\bmu^+(\y^k,\blam^k))$ 
is a local minimizer of the subproblem for 
$(\y^k,\blam^k) \in N_{\delta}(\y^*,\blam^*)$, 
and furthermore is the unique (only) local minimizer in a neighborhood of $(\x^*,\bmu^*)$. 
As well, 
$(\x^+(\y^*,\blam^*),\bmu^+(\y^*,\blam^*)) = (\x^*,\bmu^*)$.
Then Assumption~\ref{assm:nearestSolution} implies that if
$(\y^k,\blam^k) \in N_{\delta}(\y^*,\blam^*)$, 
the closest local minimizer
$(\x^{k+1},\bmu^{k+1})$
must coincide with
$(\x^+(\y^k,\blam^k),\bmu^+(\y^k,\blam^k))$
(because it is unique)
and so the result follows.
\end{proof}

Noting the affine dependence of $\y^{k+1}$ and $\blam^{k+1}$ on $\x^{k+1}$, we build on Lemma~\ref{lem:continuousXMU} to establish, essentially, that the mapping
$(\y^k,\blam^k) \mapsto (\y^{k+1},\blam^{k+1})$ 
is continuous and has a fixed point at $(\y^*,\blam^*)$.

\begin{lemma}
\label{lem:continuousYLambda}
Let Assumptions~\ref{assm:nearestSolution} and \ref{assm:SPRegularity} hold and assume that $\mbf{B}$ has full column rank.
For all $\epsilon_2 > 0$, there exists $\delta_2 > 0$ such that,
if 
$\norm{(\y^k,\blam^k) - (\y^*,\blam^*)} < \delta_2$
then
$\norm{(\y^{k+1},\blam^{k+1}) - (\y^*,\blam^*)} < \epsilon_2$.
\end{lemma}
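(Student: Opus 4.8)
The plan is to exhibit the map $(\y^k,\blam^k) \mapsto (\y^{k+1},\blam^{k+1})$ as a composition of continuous maps and to verify that it fixes $(\y^*,\blam^*)$; the claimed statement is then precisely continuity of this composite map at the point $(\y^*,\blam^*)$, read off via the $\epsilon_2$--$\delta_2$ characterization of continuity.

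First I would write $\y^{k+1}$ and $\blam^{k+1}$ explicitly in terms of $(\x^{k+1},\y^k,\blam^k)$. Since $\mbf{B}$ has full column rank, the $\y$-subproblem is strictly convex, and its unique minimizer is the (affine, hence continuous) solution of the stationarity condition~\eqref{eq:y_stationarity}; maintaining the invariant $\mbf{B}\tr\blam^k = \0$, this is Equation~\eqref{eq:y_update}, namely $\y^{k+1} = (\mbf{B}\tr\mbf{B})\inv\mbf{B}\tr(\b - \mbf{A}\x^{k+1})$, which is continuous (indeed affine) in $\x^{k+1}$. The multiplier update $\blam^{k+1} = \blam^k + \rho(\mbf{A}\x^{k+1} + \mbf{B}\y^{k+1} - \b)$ is continuous in $(\x^{k+1},\y^{k+1},\blam^k)$.

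Next I would invoke Lemma~\ref{lem:continuousXMU}: on a neighborhood of $(\y^*,\blam^*)$ the iterate $\x^{k+1}$ equals $\x^+(\y^k,\blam^k)$, a continuous function of $(\y^k,\blam^k)$. Composing, both $\y^{k+1}$ and $\blam^{k+1}$ become continuous functions of $(\y^k,\blam^k)$ on this neighborhood. Then I would verify the fixed point: setting $(\y^k,\blam^k) = (\y^*,\blam^*)$ gives $\x^{k+1} = \x^+(\y^*,\blam^*) = \x^*$, and using the KKT feasibility condition $\mbf{A}\x^* + \mbf{B}\y^* = \b$, i.e.\ $\mbf{B}\y^* = \b - \mbf{A}\x^*$, together with invertibility of $\mbf{B}\tr\mbf{B}$, the formula for $\y^{k+1}$ returns $\y^*$; since $\mbf{A}\x^* + \mbf{B}\y^* - \b = \0$, the update then forces $\blam^{k+1} = \blam^*$. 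Hence the composite map sends $(\y^*,\blam^*)$ to itself, and continuity at that point supplies, for every $\epsilon_2 > 0$, a $\delta_2 > 0$ as required.

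I expect no serious obstacle here, as the argument is a routine composition-plus-continuity statement once Lemma~\ref{lem:continuousXMU} is in hand. The only point requiring care is that evaluating the $\y$-update at $\blam^k = \blam^*$ returns exactly $\y^*$: this relies on $\mbf{B}\tr\blam^* = \0$ (the KKT condition~\eqref{mainKKTy}), which is what makes the analytic solution of~\eqref{eq:y_stationarity} reduce to~\eqref{eq:y_update} at the limit point. If one prefers to work with the unrestricted stationarity condition rather than the invariant $\mbf{B}\tr\blam^k = \0$, the only change is an extra term $-\tfrac{1}{\rho}(\mbf{B}\tr\mbf{B})\inv\mbf{B}\tr\blam^k$ in $\y^{k+1}$, which is still continuous in $\blam^k$ and vanishes at $\blam^*$ for the same reason, so the fixed-point computation and the conclusion are unchanged.
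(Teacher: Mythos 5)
Your proof is correct and follows essentially the same route as the paper: express $\y^{k+1}$ via formula~\eqref{eq:y_update} and $\blam^{k+1}$ via the multiplier update, compose with the continuous map $\x^+$ from Lemma~\ref{lem:continuousXMU}, and verify the fixed point at $(\y^*,\blam^*)$ using $\mbf{A}\x^* + \mbf{B}\y^* = \b$. Your closing remark about the invariant $\mbf{B}\tr\blam^k = \0$ (and the harmless extra term if one works with the unrestricted stationarity condition~\eqref{eq:y_stationarity}) is a correct and careful observation that the paper handles implicitly.
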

\begin{proof}
If $\mbf{B}$ has full column rank, then we can use formula~\eqref{eq:y_update} for $\y^{k+1}$.
Now, to make the following arguments as precise as possible, define
\[
\begin{aligned}
&\y^+   : (\x) 			\mapsto (\mbf{B}\tr\mbf{B})\inv \mbf{B}\tr (\b - \mbf{A}\x), \\
&\blam^+: (\x,\y,\blam) 	\mapsto \blam + \rho(\mbf{A}\x + \mbf{B}\y - \b).
\end{aligned}
\]
It is clear that $(\y^+,\blam^+)$ is continuous.
We also have
\[
\begin{aligned}
&\y^+(\x^{k+1}) = \y^{k+1}, \quad	&&\blam^+(\x^{k+1},\y^{k+1},\blam^k) = \blam^{k+1}, \\
&\y^+(\x^*) = \y^*,				&&\blam^+(\x^*,\y^*,\blam^*) = \blam^*.
\end{aligned}
\]
where the bottom relations follow from 
$\mbf{A}\x^* + \mbf{B}\y^* = \b$.

Using Lemma~\ref{lem:continuousXMU}, 
there is a continuous function $\x^+$ so that
$\x^+(\y^*,\blam^*) = \x^*$ 
and
$\x^+(\y^k,\blam^k) = \x^{k+1}$.
Thus we have
\[\begin{aligned}
&\y^+(\x^+(\y^k,\blam^k)) = \y^{k+1}, \\
&\blam^+(\x^+(\y^k,\blam^k), \y^+(\x^+(\y^k,\blam^k)), \blam^k)  = \blam^{k+1}.
\end{aligned}\]
Noting that the composition of continuous functions is continuous, we see the ``continuous dependence'' of $(\y^{k+1},\blam^{k+1})$ on $(\y^k,\blam^k)$ 
(and the fixed point at $(\y^*,\blam^*)$) 
and so the result follows.
\end{proof}

To simplify the proof of the main theorem, we combine Lemmata~\ref{lem:continuousXMU} and \ref{lem:continuousYLambda}.

\begin{lemma}
\label{lem:continuousEverything}
Let Assumptions~\ref{assm:nearestSolution} and \ref{assm:SPRegularity} hold and assume that $\mbf{B}$ has full column rank.
For all $\epsilon_3 > 0$, there exists $\delta_3 > 0$ such that,
if 
$\norm{(\y^k,\blam^k) - (\y^*,\blam^*)} < \delta_3$
then
\[
\norm{(\x^{k+1},\y^{k+1},\bmu^{k+1},\blam^{k+1}) - (\x^*,\y^*,\bmu^*,\blam^*)} < \epsilon_3.
\]
\end{lemma}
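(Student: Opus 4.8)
The plan is to combine the two continuity results already in hand, Lemma~\ref{lem:continuousXMU} (which controls the $(\x^{k+1},\bmu^{k+1})$ block) and Lemma~\ref{lem:continuousYLambda} (which controls the $(\y^{k+1},\blam^{k+1})$ block), since between them they account for all four blocks of the iterate. The whole argument is a matter of allocating the error budget $\epsilon_3$ between the two blocks and taking a common radius.

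First I would observe that the Euclidean norm decomposes over blocks: the squared norm is just the sum of the squared norms of the individual components, so regrouping gives
\[
\norm{(\x^{k+1},\y^{k+1},\bmu^{k+1},\blam^{k+1}) - (\x^*,\y^*,\bmu^*,\blam^*)}^2
= \norm{(\x^{k+1},\bmu^{k+1}) - (\x^*,\bmu^*)}^2
+ \norm{(\y^{k+1},\blam^{k+1}) - (\y^*,\blam^*)}^2.
\]
Hence it suffices to drive each of the two right-hand terms below $\epsilon_3^2/2$.

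Next I would apply each lemma with a tolerance of $\epsilon_3/\sqrt{2}$. Lemma~\ref{lem:continuousXMU} gives that $(\x^{k+1},\bmu^{k+1})$ is a continuous function of $(\y^k,\blam^k)$ taking the value $(\x^*,\bmu^*)$ at $(\y^*,\blam^*)$; continuity of that function then supplies a radius $\delta^{(1)} > 0$ such that $\norm{(\y^k,\blam^k) - (\y^*,\blam^*)} < \delta^{(1)}$ forces $\norm{(\x^{k+1},\bmu^{k+1}) - (\x^*,\bmu^*)} < \epsilon_3/\sqrt{2}$. Likewise, Lemma~\ref{lem:continuousYLambda} applied with $\epsilon_2 = \epsilon_3/\sqrt{2}$ produces a radius $\delta^{(2)} > 0$ controlling the $(\y^{k+1},\blam^{k+1})$ block. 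Setting $\delta_3 = \min\set{\delta^{(1)},\delta^{(2)}}$ makes both bounds hold simultaneously whenever $\norm{(\y^k,\blam^k) - (\y^*,\blam^*)} < \delta_3$, and substituting into the displayed identity yields a squared total norm strictly below $\epsilon_3^2$, which is the claimed bound.

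This argument is essentially bookkeeping, and I do not expect a substantive obstacle: both constituent lemmas have already absorbed the analytic content (the sensitivity-theory input behind Lemma~\ref{lem:continuousXMU} and the affine update formulas behind Lemma~\ref{lem:continuousYLambda}). The only point deserving a moment's care is that the neighborhood of $(\y^*,\blam^*)$ on which the functions $\x^+,\bmu^+$ of Lemma~\ref{lem:continuousXMU} are defined must contain the ball of radius $\delta_3$; this is automatic, since passing to the minimum of the two radii only shrinks the region we work on, and I would simply shrink $\delta_3$ further if needed to stay inside that domain.
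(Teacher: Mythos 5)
Your proposal is correct and follows essentially the same route as the paper, whose entire proof is the one-line remark that the claim ``follows from Lemmata~\ref{lem:continuousXMU} and \ref{lem:continuousYLambda} and equivalence of norms on finite dimensional spaces.'' You have merely made the bookkeeping explicit --- splitting the squared Euclidean norm over the $(\x,\bmu)$ and $(\y,\blam)$ blocks, budgeting $\epsilon_3/\sqrt{2}$ to each, and taking the minimum radius --- and your added care about staying inside the domain of the functions $\x^+,\bmu^+$ is a correct, if minor, refinement of the paper's terser argument.
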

\begin{proof}
Follows from Lemmata~\ref{lem:continuousXMU} and \ref{lem:continuousYLambda} and equivalence of norms on finite dimensional spaces.
\end{proof}

\subsection{Main result}

We now state the main convergence result:
if the penalty parameter is sufficiently large, and if for some iteration, $\y^k$ and $\blam^k$ are sufficiently close to the optimal values, then we have convergence.
Compared with Proposition~\ref{prop:almostConvergence}, the following result asserts that there is a neighborhood of the solution which captures the iterates.
\begin{theorem}
\label{thm:convergence}
Let Assumptions~\ref{assm:KKTiterates}, \ref{assm:OverallRegularity}, and \ref{assm:nearestSolution} hold.
There exist positive constants $\epsilon'$ and $\rho^*$ such that,
if 
$\rho > \rho^*$ and
$\norm{(\y^{K},\blam^{K}) - (\y^*,\blam^*)} < \epsilon'$
for some $K$,
then 
$\seq[k]{(\x^{k+1},\y^{k+1},\bmu^{k+1},\blam^{k+1})}$ 
converges to $(\x^*,\y^*,\bmu^*,\blam^*)$.
\end{theorem}
\begin{proof}
Our goal is to show that the conditions of Proposition~\ref{prop:almostConvergence} hold and apply that result;
we need to show that for $\epsilon$, $\rho^*$ guaranteed to exist by that result, that we have for all sufficiently large $k$, 
$\norm{(\x^{k+1},\y^{k+1},\bmu^{k+1},\blam^{k+1}) - (\x^*,\y^*,\bmu^*,\blam^*)} < \epsilon$
and
$\rho$ is greater than $\rho^*$.
This last condition holds by assumption.
By the analysis in Appendix~\ref{app:RegularityRelation}, Assumption~\ref{assm:OverallRegularity} implies that Assumption~\ref{assm:SPRegularity} holds for sufficiently large $\rho$
(and that $\mbf{B}$ has full column rank);
thus without loss of generality we can assume that $\rho$ is large enough that  Assumption~\ref{assm:SPRegularity} holds
(effectively, redefining $\rho^*$ if necessary).
So, we can apply Lemma~\ref{lem:continuousEverything} to see that there exists $\delta_3$ such that
$\norm{(\y^{k},\blam^{k}) - (\y^*,\blam^*)} < \delta_3$
implies
$\norm{(\x^{k+1},\y^{k+1},\bmu^{k+1},\blam^{k+1}) - (\x^*,\y^*,\bmu^*,\blam^*)} < \epsilon$.

Consider the expression appearing in the definition of the Lyapunov function $V^k$ in Equation~\eqref{eq:lyapunov}.
We have that 
\begin{equation}
\label{eq:rhoNorm}
\norm{(\y,\blam)}_{\rho} \equiv 
\left( \rho\norm{\mbf{B}\y}^2 + \frac{1}{\rho}\norm{\blam}^2 \right)^{\frac{1}{2}}
\end{equation}
is in effect a scaled 2-norm (since $\mbf{B}$ has full column rank under Assumption~\ref{assm:OverallRegularity} and $\rho >0$).
Using the equivalence of norms of finite dimensional spaces, there exist positive constants $C_1$, $C_2$, with $C_1 \le C_2$, so that
\[
C_1 \norm{(\y,\blam)}_{\rho} \le \norm{(\y,\blam)} \le C_2 \norm{(\y,\blam)}_{\rho}.
\]
Consequently, if 
$\norm{(\y,\blam)}_{\rho} < \frac{1}{C_2}\delta_3$, then
$\norm{(\y,\blam)} < \delta_3$.

We proceed with an induction argument.
Assume that for some $k$, we have
$\norm{(\y^{k},\blam^{k}) - (\y^*,\blam^*)} < \delta_3$ and
$\norm{(\y^{k},\blam^{k}) - (\y^*,\blam^*)}_{\rho} < \frac{1}{C_2}\delta_3$.

By the preceding arguments, we have
$\norm{(\x^{k+1},\y^{k+1},\bmu^{k+1},\blam^{k+1}) - (\x^*,\y^*,\bmu^*,\blam^*)} < \epsilon$.
We can then apply Proposition~\ref{prop:LyapunovDecreaseBound} which implies that $V^{k+1}$ must be less than or equal to $V^k$.
Using the definition of the norm in \eqref{eq:rhoNorm}, this means
\begin{equation}
\label{ineq:normDecreases}
\norm{(\y^{k+1},\blam^{k+1}) - (\y^*,\blam^*)}_{\rho}^2
\le 
\norm{(\y^{k},\blam^{k}) - (\y^*,\blam^*)}_{\rho}^2 
< (\frac{1}{C_2}\delta_3)^2.
\end{equation}
Then 
$\norm{(\y^{k+1},\blam^{k+1}) - (\y^*,\blam^*)}_{\rho} < \frac{1}{C_2}\delta_3$, 
which then implies 
$\norm{(\y^{k+1},\blam^{k+1}) - (\y^*,\blam^*)} < \delta_3$.

Thus, we have established that the induction hypothesis holds for $k+1$ and proved the induction step;
it remains to show that we have an induction basis.
This follows from the conditions of the theorem for $k = K$, noting that 
$\norm{(\y,\blam)} <  \frac{C_1}{C_2}\delta_3$
implies
$\norm{(\y,\blam)}_{\rho} < \frac{1}{C_2}\delta_3$,
and then taking
$\epsilon' = \frac{C_1}{C_2}\delta_3 \le \delta_3$.
Thus we have established the conditions of Proposition~\ref{prop:almostConvergence} and convergence follows.
\end{proof}

A sublinear rate of convergence falls out naturally, using the definitions from \cite[\S1.2]{bertsekas_lmm}.

\begin{corollary}
\label{cor:convergenceRate}
Let Assumptions~\ref{assm:KKTiterates}, \ref{assm:OverallRegularity}, and \ref{assm:nearestSolution} hold.
There exist positive constants $\epsilon'$ and $\rho^*$ such that,
if 
$\rho > \rho^*$ and
$\norm{(\y^{K},\blam^{K}) - (\y^*,\blam^*)} < \epsilon'$
for some $K$,
then 
$\seq[k]{\norm{(\y^k,\blam^k) - (\y^*,\blam^*)}_{\rho}}$ 
converges to zero sublinearly or in finite iterations.
\end{corollary}
\begin{proof}
We can revisit Inequality~\eqref{ineq:normDecreases} in the proof of Theorem~\ref{thm:convergence}
\begin{equation}
\notag
\norm{(\y^{k+1},\blam^{k+1}) - (\y^*,\blam^*)}_{\rho}^2
\le
\norm{(\y^{k},\blam^{k}) - (\y^*,\blam^*)}_{\rho}^2.
\end{equation}
If 
$\seq[k]{\norm{(\y^{k},\blam^{k}) - (\y^*,\blam^*)}_{\rho}}$
does not not converge to zero in finite iterations, we can assume that it is nonzero for all $k$, and thus we obtain 
\begin{equation}
\notag
\frac{\norm{(\y^{k+1},\blam^{k+1}) - (\y^*,\blam^*)}_{\rho}}{\norm{(\y^{k},\blam^{k}) - (\y^*,\blam^*)}_{\rho}} \le 1
\end{equation}
for all $k$.
Consequently, the result follows from \cite[Prop.~1.1]{bertsekas_lmm}, or this can be seen directly as the definition of Q (``quotient'')-sublinear convergence.
\end{proof}


\section{Discussion}
\label{sec:discussion}

In this section we will try to provide some further context for the results and assumptions behind them.
The following section will analyze a few examples to illustrate other points further.

\subsection{Compared with other work}
The modifications to the convergence result, compared to the analysis in \cite{boydEA11}, are in part inspired by analysis of the method of multipliers.
See, specifically, \cite[\S2.2.3]{bertsekas_lmm}, which uses properties of a primal functional and penalized primal functional as a key analytical tool.
As well, \cite[Prop.~2.14]{bertsekas_lmm}, which deals with inexact minimization of the augmented Lagrangian in the method of multipliers setting, bears similarity to Lemma~\ref{lem:primalProperties}.
Specifically, from Lemma~\ref{lem:step2} (or using Equations~\eqref{eq:toGetKKT} and \eqref{eq:y_stationarity}), we can identify the iterates $(\x^{k+1},\y^{k+1})$ as inexact solutions of a method of multipliers subproblem, and the dual residual equals the ``inexactness.''
The analysis in \cite{bertsekas_lmm} following Prop.~2.14 prescribes a method in which the level of inexactness is defined to go zero.
Meanwhile, the present analysis does not and cannot directly force the dual residuals to zero; 
we must rely on the arguments involving the Lyapunov function to show that the dual residuals converge to zero.

The recent work in \cite{wangEA15} presents very general conditions under which ADMM converges, and it is plausible that with some effort those results could be used to prove some version of the present results.
The main challenge in the present setting is meeting the assumption of ``Lipschitz sub-minimization paths'' from \cite{wangEA15}.
Effectively, this assumption plays a similar role to the sensitivity analysis employed here (in Lemma~\ref{lem:primalProperties} for example), but at a more global level.
In our notation, the assumption includes the requirement that 
$\u \mapsto \arg\min_{\x} \set{f(\x) + \iota_c(\x) : \mbf{A}\x = \u }$
is singleton-valued and Lipschitz continuous, for all $\u$ in the image/range of $\mbf{A}$, where $\iota_c$ is the indicator function of the constraints $\set{ \x : \c(\x) = \0}$
(that is, $\iota_c(\x) = 0$ if $\c(\x) = \0$, and $+\infty$ otherwise).
Using similar arguments as in Lemma~\ref{lem:primalProperties}, this mapping is indeed singleton-valued and Lipschitz continuous \emph{locally} around $\u^* = \b - \mbf{B}\y^*$ under Assumption~\ref{assm:OverallRegularity}.
However, define
$f : (x_1,x_2) \mapsto x_2$,
$c : (x_1,x_2) \mapsto x_1^2 - x_2$, and
$\mbf{A} = [0 \; 1]$;
the parametric problem 
$\min_{\x} \set{ f(\x) : c(\x) = 0, \mbf{A}\x = u}$
becomes
$\min_{\x} \set{ x_2 : x_1^2 = x_2, x_2 = u }$.
It's clear that for $u > 0$, the problem has isolated local minima $(\sqrt{u},u)$ and $(-\sqrt{u},u)$;
these are also global minima (so while local minima are unique, global minima are not),
and the problem is infeasible for $u < 0$.
When considering the reformulation with the indicator function
($\min_{\x} \set{f(\x) + \iota_c(\x) : \mbf{A}\x = \u }$)
the solution set is $\set{ (x_1, x_2) : x_2 = u}$ for $u < 0$, since the objective function equals $+\infty$ for all feasible $\x$.
Clearly this is not a singleton, and in the present approach we recover this required regularity by restricting ourselves to a local analysis with local minimizers.
However, if the assumption of Lipschitz sub-minimization paths is satisfied, stronger conclusions are possible; 
namely, \emph{global} convergence results can be shown as in \cite{wangEA15}.

The main local convergence result in Theorem~\ref{thm:convergence} is similar to the recent work in \cite{chatzipanagiotisEA17}.
The assumptions required for their convergence result are similar to those required here;
in particular, both results show convergence to a local optimal solution which is assumed to satisfy second order sufficient conditions.
Differences include the fact that nonconvex constraints are allowed in the present work.
Meanwhile, the notion of a step-size is included in the analysis of \cite{chatzipanagiotisEA17}, and this seems to contribute to an observed improvement in robustness and convergence rates;
see \cite[{\S}IV]{chatzipanagiotisEA17}.

\subsection{Local versus global solution}
Another difference between the present work and \cite{chatzipanagiotisEA17} is the presence of Assumption~\ref{assm:nearestSolution}, that the closest local minimizer to $\x^*$ is found at each iteration.
This difference seems to be due to whether a global or local minimizer of \eqref{ADMMSP} is found at each iteration.
Indeed, if we assume that $(\x^*,\y^*)$ is a \emph{global} minimum of Problem~\eqref{problem} satisfying Assumption~\ref{assm:OverallRegularity}, and that a \emph{global} minimizer of \eqref{ADMMSP} is found at each iteration, it might be possible to use the sensitivity result from \cite[Thm.~4.1]{shapiroEA04} 
(which the authors of \cite{chatzipanagiotisEA17} seem to cite in the proof of their Lemma~5)
to modify the analysis, and show convergence to a global minimizer of Problem~\eqref{problem}.

However, if we do not wish to solve the subproblem globally, this assumption is unavoidable;
consider
\[
	\min_{x} \set{ f(x) + \phi(x,\blam,\y,\rho) : x^2 - 1 = 0 }.
\]
The feasible set of this problem is $\set{+1, -1}$, and no matter the definition of $f$ or $\phi$, this subproblem will always have two local minima, and some assumption must be made to resolve which is found.
As mentioned, something like Assumption~\ref{assm:nearestSolution} is made in the analysis of the classic method of multipliers, and in practice one would likely supply $\x^k$ as the initial guess when solving \eqref{ADMMSP} to obtain $\x^{k+1}$.
Then if $\x^k$ is close to $\x^*$, a well-behaved local solver should produce a solution $\x^{k+1}$ which is close to $\x^*$ as well.

\subsection{Convergence rates and other considerations}
The convergence rate from Corollary~\ref{cor:convergenceRate} is a little disappointing for a few reasons.
For one, under Assumption~\ref{assm:OverallRegularity} other optimization methods achieve, for instance, at least superlinear convergence
(see \cite[\S19.8]{nocedal_wright} for a high level discussion in the context of interior-point methods).
Further, numerical studies, like in \cite{rodriguezEA18}, show that the primal and dual errors, at least, seem to display linear convergence rates.
We also note that the numerical studies in \cite{rodriguezEA18} show that the Lyapunov function may decrease non-monotonically in some cases, which is at odds with Proposition~\ref{prop:LyapunovDecreaseBound} which indicates that the decrease must be monotonic in the setting of Theorem~\ref{thm:convergence}.
This hints that more general results are possible.

As the convergence rate result Corollary~\ref{cor:convergenceRate} makes explicit, the natural norm appearing in this analysis is $\norm{\cdot}_{\rho}$.
As mentioned before, this norm is inspired by the form of the Lyapunov function from \eqref{eq:lyapunov}, and in fact the convergence rate result directly implies that $\seq[k]{\sqrt{V^k}}$ converges to zero sublinearly.
From a geometric perspective, the norm is troublesome, especially as $\rho$ increases.
This is because, for $\rho > 1$, the neighborhood around $(\y^*,\blam^*)$ that we must ``hit'' for convergence to occur is an ellipse that is elongated in the $\blam$ dimension and shortened in the $\y$ dimension.
This indicates that choosing a very large value of $\rho$ may make it more difficult to choose an appropriate initial guess $\y^0$.

\section{Illustrative examples}
\label{sec:examples}

We will not consider particularly extensive numerical studies here;
the recent work in \cite{rodriguezEA18} presents excellent numerical studies of the performance of ADMM as well as the method of multipliers and other variants in the nonconvex setting.
Instead, we will analyze a few examples to illustrate a few points.

\subsection{Convergence of a small example}
This example comes from \cite[Example~2.1]{houskaEA16}, which claims that a particular variant of ADMM does \emph{not} converge for the example.
In contrast, we apply Algorithm~\ref{alg:ADMM} and verify that convergence does indeed happen.
This highlights why a detailed analysis of specific forms of ADMM, like this work, are important for understanding the situations and assumptions under which ADMM converges.

Consider 
$
\min_{x_1,x_2}\set{ x_1 x_2 : x_1 = x_2},
$
which in the formulation considered in this work is
\begin{align*}
\min_{x_1,x_2,y}\;& x_1 x_2\\
\st
&x_1 - y = 0,\\
&x_2 - y = 0.
\end{align*}
It is clear the solution is $x_1 = x_2 = y =0$.
Let
$\mbf{H} = \begin{bmatrix}0 & 1\\1 & 0\end{bmatrix}$,
$\mbf{A} = \mbf{I}$, $\mbf{B} = [-1, -1]\tr$, $\b = \0$.
The subproblem~\eqref{ADMMSP} 
in this case has first order stationary conditions 
\[
\mbf{H}\x + \blam^k + \rho\mbf{A}\tr(\mbf{A}\x + \mbf{B}y^k) = \0 
\]
which imply the update formula for $\x$:
\[
(\mbf{H} + \rho\mbf{I})\x^{k+1} = -(\blam^k + \rho\mbf{B}y^k).
\]
Let $\mbf{R} = (\mbf{H} + \rho\mbf{I})\inv$ which is indeed invertible for $\rho > 1$.
With this formula giving the update of $\x$, we can write the updates for $y$ and $\blam$.
We have
\[
y^{k+1} 
	= (\mbf{B}\tr\mbf{B})\inv\mbf{B}\tr(-\x^{k+1}) 
	= \frac{1}{2}\mbf{B}\tr \mbf{R}(\blam^k + \rho\mbf{B}y^k)
\]
where we note that $\mbf{B}\tr\mbf{B} = 2$,
and
\[
\begin{aligned}
\blam^{k+1} 
	&= \blam^k + \rho(\x^{k+1} + \mbf{B}y^{k+1}) \\
	&= (\mbf{I} - \rho\mbf{R} + \frac{\rho}{2}\mbf{B}\mbf{B}\tr \mbf{R})\blam^k
   + (\frac{\rho^2}{2}\mbf{B}\mbf{B}\tr \mbf{R}\mbf{B} - \rho^2\mbf{R}\mbf{B})y^k.
\end{aligned}
\]
Writing out these updates for one big system in $(\blam^{k+1},y^k)$ we get
\[
\begin{bmatrix}
\blam^{k+1} \\ y^{k+1}
\end{bmatrix} 
=
\begin{bmatrix}
\mbf{I} + \rho(\frac{1}{2}\mbf{B}\mbf{B}\tr - \mbf{I})\mbf{R} & 
	\rho^2(\frac{1}{2}\mbf{B}\mbf{B}\tr - \mbf{I})\mbf{R}\mbf{B} \\
\frac{1}{2}\mbf{B}\tr\mbf{R} &
	\frac{\rho}{2}\mbf{B}\tr\mbf{R} \mbf{B}
\end{bmatrix}
\begin{bmatrix}
\blam^{k} \\ y^{k}
\end{bmatrix}.
\]
Call the matrix defining the iteration above $\mbf{M}$. 
Noting that 
\[
\mbf{R} = 
\begin{bmatrix}
\frac{-\rho}{1-\rho^2} & \frac{1}{1-\rho^2} \\
\frac{1}{1-\rho^2} & \frac{-\rho}{1-\rho^2}
\end{bmatrix}
\]
we can write $\mbf{M}$ explicitly in terms of $\rho$ to get
\[
\mbf{M} = 
\begin{bmatrix}
\frac{1 - \sfrac{\rho}{2}}{1-\rho} & \frac{-\sfrac{\rho}{2}}{1-\rho} & 0 \\
\frac{-\sfrac{\rho}{2}}{1-\rho} & \frac{1 - \sfrac{\rho}{2}}{1-\rho} & 0 \\
\frac{-\sfrac{1}{2}}{1+\rho} & \frac{-\sfrac{1}{2}}{1+\rho} & \frac{\rho}{1+\rho}
\end{bmatrix}.
\]
It is easy to verify that the eigenvalues of $\mbf{M}$ are 
$1$, $\sfrac{\rho}{(1 + \rho)}$, and $\sfrac{1}{(1 - \rho)}$,
with corresponding eigenvectors
$({-}1,{-}1,1)$, $(0,0,1)$, and $(1,{-}1,0)$, respectively.
We note that for $\rho > 2$, the absolute values of $\sfrac{\rho}{(1 + \rho)}$ and $\sfrac{1}{(1 - \rho)}$ are both strictly less than $1$.
In this case, the iterated linear system converges to some vector in the eigenspace of the eigenvalue that equals $1$, i.e. the span of $({-}1,{-}1,1)$
(see for instance \cite[\S5.3]{strang}). 
Let this solution be $(\blam^*,y^*)$.
Recall that Algorithm~\ref{alg:ADMM} enforces $\mbf{B}\tr\blam^k = \0$ for all $k$.
(See the discussion following Equation~\eqref{eq:y_stationarity}.
Algorithm~\ref{alg:ADMM} requires $\mbf{B}\tr \blam^0 = \0$, and for this particular example this becomes $\lambda_1^{k} + \lambda_2^{k} = 0$;
from the iteration defined by the matrix $\mbf{M}$ above, we also have 
$\lambda_1^{k+1} + \lambda_2^{k+1} = \lambda_1^{k} + \lambda_2^{k}$.)
Consequently, we see that $\mbf{B}\tr\blam^* = \0$ as well since the nullspace of $\mbf{B}\tr$ is closed.
Clearly the only point $(\lambda_1,\lambda_2,y)$ in the span of $({-}1,{-}1,1)$ that also satisfies $\lambda_1 = -\lambda_2$ is $(0,0,0)$, which indeed corresponds to the solution of the optimization problem.


In this analysis, we came across two critical values of $\rho$ in order for convergence to occur.
It is tempting to try to match these thresholds with those in the theory coming from Lemmata~\ref{lem:primalProperties} and \ref{lem:step1}.
In \S\ref{sec:ex:critical_rho}, we explore this further.

\subsection{Different step lengths}
In some versions of ADMM, like in \cite{chatzipanagiotisEA17}, the update for $\blam$ includes an extra parameter to control the step length.
We modify the previous example to explore what affect that might have.

The formulas for $\x^{k+1}$ and $y^{k+1}$ remain the same:
\[
\begin{aligned}
\x^{k+1} 
	&= -\mbf{R}(\blam^k + \rho\mbf{B}y^k) \\
y^{k+1} 
	&= \frac{1}{2}\mbf{B}\tr \mbf{R}(\blam^k + \rho\mbf{B}y^k).
\end{aligned}
\]
We add a step length parameter $\tau$ to the update for $\blam^{k+1}$:
\[
\begin{aligned}
\blam^{k+1} 
	&= \blam^k + \tau\rho(\x^{k+1} + \mbf{B}y^{k+1}) \\
	&= (\mbf{I} - \tau\rho\mbf{R} + \frac{\tau\rho}{2}\mbf{B}\mbf{B}\tr \mbf{R})\blam^k
   + (\frac{\tau\rho^2}{2}\mbf{B}\mbf{B}\tr \mbf{R}\mbf{B} - \tau\rho^2\mbf{R}\mbf{B})y^k.
\end{aligned}
\]
The modified iteration for $(\blam^{k+1},y^k)$ is
\[
\begin{bmatrix}
\blam^{k+1} \\ y^{k+1}
\end{bmatrix} 
=
\begin{bmatrix}
\mbf{I} + \tau\rho(\frac{1}{2}\mbf{B}\mbf{B}\tr - \mbf{I})\mbf{R} & 
	\tau\rho^2(\frac{1}{2}\mbf{B}\mbf{B}\tr - \mbf{I})\mbf{R}\mbf{B} \\
\frac{1}{2}\mbf{B}\tr\mbf{R} &
	\frac{\rho}{2}\mbf{B}\tr\mbf{R} \mbf{B}
\end{bmatrix}
\begin{bmatrix}
\blam^{k} \\ y^{k}
\end{bmatrix}
\]
or
\[
\begin{bmatrix}
\lambda_1^{k+1} \\ \lambda_2^{k+1} \\ y^{k+1}
\end{bmatrix} 
=
\begin{bmatrix}
\frac{1 - \rho(1 - \sfrac{\tau}{2})}{1-\rho} & \frac{-\sfrac{\tau\rho}{2}}{1-\rho} & 0 \\
\frac{-\sfrac{\tau\rho}{2}}{1-\rho}          & \frac{1 - \rho(\sfrac{1 - \tau}{2})}{1-\rho} & 0 \\
\frac{-\sfrac{1}{2}}{1+\rho}                 & \frac{-\sfrac{1}{2}}{1+\rho} & \frac{\rho}{1+\rho}
\end{bmatrix}
\begin{bmatrix}
\lambda_1^{k} \\ \lambda_2^{k} \\ y^{k}
\end{bmatrix}.
\]

Repeating the analysis from before, we see that the matrix defining the iteration has an eigenvalue equal to $1$, with corresponding eigenvector $(-1, -1, 1)$, and two other eigenvalues 
$\sfrac{\rho}{(1 + \rho)}$
and
$\sfrac{(1 - \rho(1 - \tau))}{(1 - \rho)}$.
For any positive $\rho$, 
$\sfrac{\rho}{(1 + \rho)} \in (0,1)$, 
while for any $\rho > 1$ and $\tau$ such that $0 < \tau < \min\set{1, \sfrac{2(\rho-1)}{\rho}}$, we have
$\abs{\sfrac{(1 - \rho(1 - \tau))}{(1 - \rho)}} < 1$.
Once again, if we impose $\mbf{B}\tr \blam^0 = \0$, then again we can show by induction that $\mbf{B}\tr \blam^k = \0$ for all $k$.
Thus, as before, the iteration converges to the solution $(\lambda_1^*, \lambda_2^*, y) = (0,0,0)$.
Critically though, using a step length $\tau$ permits us to keep $\rho$ smaller than before
(where we had required $\rho > 2$).

We could try to modify the analysis to permit a step length;
however, it is unclear what practical impact it might have.
For one, the critical values of $\rho$ from Lemmata~\ref{lem:primalProperties} and \ref{lem:step1} are essentially independent of the updates used in the algorithm.
Further, a step length is likely to impact the convergence \emph{rate}, and the present analysis only makes very weak statements about convergence rates.
A different analytical approach is needed to take full advantage of a step length.

\subsection{Critical value of $\rho$}
\label{sec:ex:critical_rho}

Next we study an example to assess the critical value of $\rho$ predicted by the theory.
Ultimately, this reiterates that this threshold value is not something that one can compute in practice; 
still, this example is interesting as it demonstrates that due to constraints, the critical value of $\rho$ can be zero, even in the nonconvex setting.

Consider 
$
\min_{\x} \set{\frac{1}{2}x_1^2 + \frac{1}{2}x_2^2 - x_3^2 : x_1 = x_2, x_1^2 + x_3^2 = 1 }
$,
which is reformulated as 
\[\begin{aligned}
\min_{\x,y}\; &\frac{1}{2}x_1^2 + \frac{1}{2}x_2^2 - x_3^2\\
\st
&x_1 - y = 0,\\
&x_2 - y = 0,\\
&x_1^2 + x_3^2 = 1.
\end{aligned}\]
Because of the constraint $x_1 = x_2$, this is equivalent to a two-dimensional problem and it is easy to see that the solution is $(x_1^*, x_2^*, x_3^*, y^*) = (0, 0, 1, 0)$.
The optimal multipliers are $(\lambda_1^*, \lambda_2^*, \mu^*) = (0,0,1)$.
It is fairly simple to verify that this solution satisfies Assumption~\ref{assm:OverallRegularity}.
For instance, the Jacobian of the active constraints is 
\[\mbf{C} = 
\begin{bmatrix}
1 & 0 & 0 & -1 \\
0 & 1 & 0 & -1 \\
0 & 0 & 2 &  0
\end{bmatrix}
\]
which has linearly independent rows;
further, it has nullspace equal to $\text{span}\set{(1, 1, 0, 1)}$.
The partial Hessian of the Lagrangian is
\[
\mbf{H}_{xx}(\x^*, \mu^*) = 
\begin{bmatrix}
3 & 0 & 0 \\
0 & 1 & 0 \\
0 & 0 & 0
\end{bmatrix}
\]
and we can verify that 
$\begin{bmatrix} \mbf{H}_{xx}(\x^*, \mu^*) & \0 \\ \0 & 0 \end{bmatrix}$
is positive definite on the nullspace of $\mbf{C}$.

In order to determine the critical value of $\rho$ required for convergence, we first look at Assumption~\ref{assm:SPRegularity}.
We require 
\[
\mbf{H}_{xx}(\x^*, \mu^*) + \rho\mbf{A}\tr\mbf{A}
=
\begin{bmatrix}
3 & 0 & 0 \\
0 & 1 & 0 \\
0 & 0 & 0
\end{bmatrix}
+
\rho
\begin{bmatrix}
1 & 0 & 0 \\
0 & 1 & 0 \\
0 & 0 & 0
\end{bmatrix}
\]
to be positive definite on $\text{span}\set{(1,0,0), (0,1,0)}$, the nullspace of 
$\begin{bmatrix} 0 & 0 & 2 \end{bmatrix}$.
We see this holds for $\rho > 0$.

Next, we look at Lemma~\ref{lem:step1}.
We need to establish that $(\x^*, y^*)$ is a local minimizer of the augmented Lagrangian evaluated at the optimal multipliers:
\[
L_{\rho} : (\x, y) \mapsto 
	\frac{1}{2}x_1^2 + \frac{1}{2}x_2^2 - x_3^2 + 
	\frac{\rho}{8} ( (x_1 - y)^2 + (x_2 - y)^2) + 
	(x_1^2 + x_3^2 - 1) + \frac{\rho}{2} (x_1^2 + x_3^2 - 1)^2
\]
Note that we have used $\blam^* = \0$ and $\mu^* = 1$.
To determine the value of $\rho$ required to make $(\x^*, y^*)$ a local minimizer, we evaluate the Hessian;
we get
\[ \grad^2 L_{\rho}(\x^*, y^*) = 
\begin{bmatrix}
3 + \sfrac{\rho}{4} & 0 & 0 & -\sfrac{\rho}{4} \\
0 & 1 + \sfrac{\rho}{4} & 0 & -\sfrac{\rho}{4} \\
0 & 0 & 4\rho & 0 \\
-\sfrac{\rho}{4} & -\sfrac{\rho}{4} & 0 & \sfrac{\rho}{2}
\end{bmatrix}
\]
The top left $3 \times 3$ submatrix is positive definite for any $\rho > 0$.
Using Schur complements we can verify that $\grad^2 L_{\rho}(\x^*, y^*)$ is positive definite for any $\rho > 0$
(the Schur complement of the top left $3 \times 3$ submatrix is the $1 \times 1$ matrix with entry 
$\sfrac{(128\rho^2 + 768 \rho)}{(32\rho^2 + 512 \rho + 1536)}$;
evidently this matrix is positive definite for any $\rho > 0$).
Not surprisingly, this is the same threshold value required above by Assumption~\ref{assm:SPRegularity}.

The final step is to determine the primal functional defined in Lemma~\ref{lem:primalProperties}:
\[
	p(\r,\q) = \min_{\x,y}\set{\frac{1}{2}x_1^2 + \frac{1}{2}x_2^2 - x_3^2 - \r\tr\x : 
		x_1^2 + x_3^2 = 1, x_1 - y = q_1, x_2 - y = q_2 }
\]
Specifically, we need to determine the value of $\rho$ so that 
$\grad_{qq}^2 p(\0,\0) + \rho \mbf{I} \symgt \0$.
Simplifying the problem to a one-dimensional problem in $y$ by making the substitutions
$x_1 = y + q_1$, 
$x_2 = y + q_2$,
$x_3^2 = 1 - (y + q_1)^2$,
we get 
$p(\0, \q) = \min_y \set{2y^2 + (3q_1 + q_2)y + \frac{3}{2}q_1^2 + \frac{1}{2}q_2^2 - 1 }$.
We can solve this for the optimal $y$, and obtain explicitly
$p(\0, \q) = \frac{3}{8}(q_1 - q_2)^2 - 1$.
Thus
$\grad_{qq}^2 p(\0,\0) = \sfrac{3}{4} \begin{bmatrix} 1 & -1 \\ -1 & 1 \end{bmatrix}$
and we see that $\grad_{qq}^2 p(\0,\0) + \rho \mbf{I} \symgt \0$ for any $\rho > 0$.

Consequently, the theory predicts that $\rho > 0$ should suffice for convergence, assuming $y^0$ and $\blam^0$ are sufficiently close to the optimal values $y^*$ and $\blam^*$.
We test this numerically.
Setting $\rho = 10^{-1}$, we sample $y^0$ and $\hat{\lambda}$ uniformly from $[-2, 2]$;
subsequently we set $\blam^0 = (\hat{\lambda}, -\hat{\lambda})$
(in order to enforce $\mbf{B}\tr \blam^0 = \0$ as required by Algorithm~\ref{alg:ADMM}).
Setting an iteration limit of 500, we apply Algorithm~\ref{alg:ADMM}.
Repeating this 100 times for the different random starting values, we see that the worst error over these 100 trials is less than $3 \times 10^{-14}$
(specifically, for each trial, $\max\set{\abs{y^{500} - y^*}, \norm{\blam^{500} - \blam^*}} < 3\times 10^{-14}$).

To calculate that we required $\rho > 0$, we needed to know the solution beforehand, which is why this critical value is not something we can know in practice.
However, knowing that a critical value \emph{exists} is still useful;
meanwhile this shortcoming of the theory highlights the need for future work.
\section{Conclusions}
\label{sec:conclusion}
This work has investigated the theoretical performance of the alternating-direction method of multipliers as it applies to nonconvex optimization problems.
These theoretical contributions culminate in a local convergence result.
This result helps explain the empirical performance of ADMM often observed on nonconvex problems from engineering applications. 
The result permits nonconvex constraints in the subproblems and does not specify how these constraints must be handled in the solution of the subproblems, which is important to practical implementations that take advantage of state-of-the-art local optimization software.

\section*{Financial and ethical disclosures}
This work was not funded by any grants. 
The author declares that he has no conflict of interest.

\subsection*{Acknowledgments}
The author would like to thank his colleagues Shivakumar Kameswaran, Thomas Badgwell, and Francisco Trespalacios for fruitful discussions in developing this work.
\appendix

\section{Regularity of overall problem and subproblems}
\label{app:RegularityRelation}

This section establishes the claim that Assumption~\ref{assm:OverallRegularity} implies Assumption~\ref{assm:SPRegularity};
that is, that a solution of the main problem~\eqref{problem} which satisfies conditions including the second-order sufficient conditions implies that the subproblems also have solutions satisfying similar conditions.
This is established through the following lemmata.
This next result is a modification of \cite[Lemma~3.2.1]{bertsekas_nlp}.

\begin{lemma}
\label{lem:PD}
Let $\mbf{H} \in \mbb{R}^{n \times n}$ be a symmetric matrix and let $\mbf{C} \in \mbb{R}^{p \times n}$ and $\mbf{D} \in \mbb{R}^{p' \times n}$.
Assume that $\mbf{H}$ is positive definite on the nullspace of $\begin{bmatrix} \mbf{C} \\ \mbf{D} \end{bmatrix}$:
$\z \tr \mbf{H} \z > 0$ for all $\z \neq \0$ with $\mbf{C}\z = \0$ and $\mbf{D}\z = \0$.
Then there exists $\rho^*$ such that for all $\rho > \rho^*$,
\[
\z\tr (\mbf{H} + \rho \mbf{D}\tr \mbf{D}) \z > 0
\]
for all $\z \neq \0$ with $\mbf{C}\z = \0$.
\end{lemma}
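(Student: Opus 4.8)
The plan is to argue by contradiction using a compactness argument, directly adapting the standard proof of \cite[Lemma~3.2.1]{bertsekas_nlp}; the only new ingredient is carrying the extra constraint $\mbf{C}\z = \0$ through the limiting process. Suppose the conclusion fails. Then for every candidate threshold there is a larger value of $\rho$ and a nonzero $\z$ in the nullspace of $\mbf{C}$ that violates the desired inequality. Normalizing these vectors, I would extract scalars $\rho_k \to \infty$ together with unit vectors $\z_k$ satisfying $\mbf{C}\z_k = \0$ and
\[
\z_k \tr \mbf{H} \z_k + \rho_k \norm{\mbf{D}\z_k}^2 \le 0
\]
for all $k$.

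Next I would exploit compactness of the unit sphere in $\mbb{R}^n$: passing to a subsequence, $\z_k \to \bar{\z}$ with $\norm{\bar{\z}} = 1$, so in particular $\bar{\z} \neq \0$. Since $\mbf{C}$ is linear and hence continuous, $\mbf{C}\bar{\z} = \0$. The quantity $\z_k \tr \mbf{H} \z_k$ is bounded over the unit sphere, say by $M$ in absolute value, so the displayed inequality gives $\rho_k \norm{\mbf{D}\z_k}^2 \le M$, whence $\norm{\mbf{D}\z_k}^2 \le M/\rho_k \to 0$; by continuity this forces $\mbf{D}\bar{\z} = \0$. Thus $\bar{\z}$ is a nonzero vector lying in the nullspace of both $\mbf{C}$ and $\mbf{D}$. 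Finally I would play off the two competing facts about $\bar{\z}$: the hypothesis of the lemma applies to it (nonzero, with $\mbf{C}\bar{\z} = \0$ and $\mbf{D}\bar{\z} = \0$), giving $\bar{\z} \tr \mbf{H} \bar{\z} > 0$; yet dropping the nonnegative penalty term from the displayed inequality yields $\z_k \tr \mbf{H} \z_k \le 0$ for every $k$, so passing to the limit gives $\bar{\z} \tr \mbf{H} \bar{\z} \le 0$, a contradiction.

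The argument is otherwise routine; the one point deserving care—and the place where the ``modification'' over \cite[Lemma~3.2.1]{bertsekas_nlp} lives—is ensuring that the limit vector $\bar{\z}$ simultaneously inherits all three properties ($\bar{\z}\neq\0$, $\mbf{C}\bar{\z}=\0$, $\mbf{D}\bar{\z}=\0$), so that the weaker hypothesis (positive definiteness only on the intersection of the nullspaces of $\mbf{C}$ and $\mbf{D}$, rather than on the entire nullspace of $\mbf{D}$) is exactly what the final step requires. An equivalent alternative would be to fix an orthonormal basis $\mbf{P}$ of the nullspace of $\mbf{C}$ and apply \cite[Lemma~3.2.1]{bertsekas_nlp} verbatim to the reduced matrices $\mbf{P}\tr\mbf{H}\mbf{P}$ and $\mbf{D}\mbf{P}$, translating the conclusion back via $\z = \mbf{P}\mathbf{w}$; I would favor the self-contained contradiction above, since it avoids the change of coordinates and keeps the dependence on $\mbf{C}$ transparent.
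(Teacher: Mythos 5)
Your proposal is correct and matches the paper's own proof essentially step for step: contradiction, normalization to the unit sphere, compactness to extract $\bar{\z}$ with $\mbf{C}\bar{\z}=\0$, forcing $\mbf{D}\bar{\z}=\0$ from $\rho_k \to \infty$, and contradicting the hypothesis $\bar{\z}\tr\mbf{H}\bar{\z}>0$. The only cosmetic difference is that you bound $\abs{\z_k\tr\mbf{H}\z_k}$ explicitly to get $\norm{\mbf{D}\z_k}^2 \le M/\rho_k \to 0$, where the paper reaches the same conclusion via a $\limsup$ argument along the sequence $\rho_k = k$.
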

\begin{proof}
Assume the contrary.
Then for all $k \in \mbb{N}$, there exists $\z^k \neq \0$ such that
$(\z^k)\tr (\mbf{H} + k \mbf{D}\tr \mbf{D}) \z^k \le 0$ and
$\mbf{C} \z^k = \0$.
Assume without loss of generality that $\norm{\z^k} = 1$ (we can scale $\z^k$ as necessary).
Since $\seq[k]{\z^k}$ is in a compact set, we have a subsequence converging to some point $\bar{\z}$ with $\norm{\bar{\z}} = 1$ and $\mbf{C}\bar{\z} = \0$.
Taking the limit superior of 
$(\z^k)\tr (\mbf{H} + k \mbf{D}\tr \mbf{D}) \z^k \le 0$
over this subsequence, we get
\begin{equation}
\label{ineq:PD:toContradict}
\bar{\z} \tr \mbf{H} \bar{\z} + \limsup_k  k (\z^k)\tr \mbf{D}\tr \mbf{D} \z^k \le 0.
\end{equation}
Since $(\z^k)\tr \mbf{D}\tr \mbf{D} \z^k \ge 0$ for all $k$, we must have (the subsequence)
$\{(\z^k)\tr  \mbf{D}\tr \mbf{D} \z^k\}$ 
converging to zero, or else the limsup would be infinite.
Thus $(\mbf{D}\bar{\z})\tr \mbf{D} \bar{\z} = 0$, which implies $\mbf{D}\bar{\z} = \0$.
But by hypothesis this means $\bar{\z}\tr \mbf{H}\bar{\z} > 0$.
Combined with the fact that $\limsup_k k (\z^k)\tr \mbf{D}\tr \mbf{D} \z^k$ must be nonnegative (since each term is nonnegative),
this contradicts Inequality~\eqref{ineq:PD:toContradict}.
\end{proof}

\begin{lemma}
\label{lem:local_reg1}
Let Assumption~\ref{assm:OverallRegularity} hold.
Then for all sufficiently large $\rho$, 
Assumption~\ref{assm:SPRegularity} holds.
\end{lemma}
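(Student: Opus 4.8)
The plan is to verify each clause of Assumption~\ref{assm:SPRegularity} in turn, with essentially all of the work concentrated on the second-order condition, which I will obtain from the overall second-order condition by invoking Lemma~\ref{lem:PD}.

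First, the differentiability hypotheses and the assertion that $(\x^*,\y^*,\bmu^*,\blam^*)$ is a KKT point of Problem~\eqref{problem} are shared verbatim between the two assumptions, so they transfer immediately. To see that $(\x^*,\bmu^*)$ is a KKT point of Subproblem~\eqref{ADMMSP} at $\y^k=\y^*$, $\blam^k=\blam^*$, I would write the subproblem stationarity condition~\eqref{spKKT1} at this point and observe that the penalty term $\rho\mbf{A}\tr(\mbf{A}\x^*+\mbf{B}\y^*-\b)$ vanishes, since $\mbf{A}\x^*+\mbf{B}\y^*=\b$ by \eqref{mainKKTfeas2}; what remains is exactly \eqref{mainKKTx}, while \eqref{spKKT2} is just \eqref{mainKKTfeas1}. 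For the linear independence constraint qualification at $\x^*$, I note that the vectors $\begin{bmatrix}\grad c_j(\x^*)\tr & \0\end{bmatrix}$ form a subset of the rows of $\mbf{C}$, which are linearly independent under Assumption~\ref{assm:OverallRegularity}; because these rows have zero $\y$-components, a vanishing linear combination of the $\grad c_j(\x^*)$ is equivalent to a vanishing linear combination of the corresponding rows of $\mbf{C}$, so the $\grad c_j(\x^*)$ are themselves linearly independent.

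The substantive step is the second-order condition. I would apply Lemma~\ref{lem:PD} with $\mbf{H}=\mbf{H}_{xx}(\x^*,\bmu^*)$, with $\grad\c(\x^*)\tr$ as the matrix whose nullspace is to be preserved, and with $\mbf{A}$ as the \emph{penalized} matrix (playing the role of $\mbf{D}$). Its conclusion is precisely that, for all $\rho$ large enough, $\z\tr(\mbf{H}_{xx}(\x^*,\bmu^*)+\rho\mbf{A}\tr\mbf{A})\z>0$ for every $\z\neq\0$ with $\grad\c(\x^*)\tr\z=\0$, which is the required subproblem second-order condition. To invoke the lemma I must verify its hypothesis: that $\mbf{H}_{xx}(\x^*,\bmu^*)$ is positive definite on the joint nullspace $\set{\z : \grad\c(\x^*)\tr\z=\0,\ \mbf{A}\z=\0}$. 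This is where the reduction from the $(\x,\y)$-space condition to an $\x$-space condition occurs: given any such $\z\neq\0$, I form the padded vector $(\z,\0)$ and check that it lies in the nullspace of the block matrix $\mbf{C}$, since the top block gives $\grad\c(\x^*)\tr\z=\0$ and the bottom block gives $\mbf{A}\z+\mbf{B}\0=\mbf{A}\z=\0$; the overall second-order condition of Assumption~\ref{assm:OverallRegularity} then applies to $(\z,\0)$ and yields $\z\tr\mbf{H}_{xx}(\x^*,\bmu^*)\z>0$.

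I expect the only genuine subtlety to be this last reduction. The overall condition is stated on the combined $(\x,\y)$-space with the block Jacobian $\mbf{C}$, whereas the subproblem condition lives purely in the $\x$ variables, and the mechanism that reconciles them is that setting the $\y$-component to zero collapses the coupling constraint $\mbf{A}\z_x+\mbf{B}\z_y=\0$ into exactly $\mbf{A}\z_x=\0$, matching the extra nullspace requirement in the hypothesis of Lemma~\ref{lem:PD}. (As a byproduct, restricting the same overall condition instead to vectors of the form $(\0,\z_y)$ forces $\mbf{B}$ to have full column rank, as already noted after Assumption~\ref{assm:OverallRegularity}.) Assembling the verified clauses then gives Assumption~\ref{assm:SPRegularity} for all sufficiently large $\rho$.
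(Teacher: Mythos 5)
Your proof is correct and follows essentially the same route as the paper's: each clause of Assumption~\ref{assm:SPRegularity} is verified directly, the KKT and LICQ clauses transfer exactly as you describe, and the second-order condition comes from Lemma~\ref{lem:PD} together with the padding trick $\z = (\z_x,\0)$. The only (immaterial) difference is order of operations: the paper applies Lemma~\ref{lem:PD} in the full $(\x,\y)$-space with $\mbf{H}=\mbf{H}^*$ and $\mbf{D}=[\mbf{A}\;\mbf{B}]$ (so its hypothesis is literally the condition of Assumption~\ref{assm:OverallRegularity}) and restricts to vectors $(\z_x,\0)$ afterward, whereas you pad first to verify the lemma's hypothesis and then invoke it purely in $\x$-space with $\mbf{D}=\mbf{A}$, arriving at the same conclusion.
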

\begin{proof}
If $(\x^*,\y^*,\bmu^*,\blam^*)$ is a KKT point of the overall problem~\eqref{problem}, then we have
\begin{align*}
\grad f(\x^*) + 
	\grad \mbf{c}(\x^*) \bmu^* + 
	\mbf{A} \tr \blam^* &= \0, \\
	\c(\x^*) &= \0.
\end{align*}
Since we have 
$\mbf{A}\x^* + \mbf{B}\y^* = \b$, 
we can add
$\mbf{A}\tr (\rho (\mbf{A}\x^* + \mbf{B}\y^* - \b))$
to the first equation to get
\begin{align*}
\grad f(\x^*) + 
	\grad \mbf{c}(\x^*) \bmu^* + 
	\mbf{A} \tr (\blam^* + \rho (\mbf{A}\x^* + \mbf{B}\y^* - \b)) &= \0, \\
	\c(\x^*) &= \0,
\end{align*}
which we recognize as the KKT conditions of the subproblem when 
$\y^k = \y^*$ and $\blam^k = \blam^*$.
(see Equations~\eqref{spKKT}).

Differentiability of $f$ and $\c$ and the linear independence constraint qualification 
for the subproblems follow directly from the conditions of Assumption~\ref{assm:OverallRegularity}.
It remains to show that the second order sufficient conditions hold.
Let $\mbf{H}^*$ equal the Hessian of the Lagrangian of Problem~\eqref{problem} at the given KKT point;
that is, let
\begin{equation}
\notag
\mbf{H}^* = \begin{bmatrix} \mbf{H}_{xx}(\x^*,\bmu^*) & \0 \\ \0 & \0 \end{bmatrix}
\end{equation}
(where $\mbf{H}_{xx}$ is defined in Assumption~\ref{assm:OverallRegularity}).
By Assumption~\ref{assm:OverallRegularity}, for any $\z$ satisfying
$\z \neq \0$, 
$\mbf{C}\z = \0$,
we must have
$\z \tr \mbf{H}^* \z > 0$
(where, again, $\mbf{C}$ is defined in Assumption~\ref{assm:OverallRegularity}).
Noting the form of $\mbf{C}$, by Lemma~\ref{lem:PD} this means that there exists $\rho^*$ such that for all $\rho > \rho^*$,
\[
\z \tr (\mbf{H}^* + \rho [\mbf{A}\; \mbf{B}] \tr [\mbf{A}\; \mbf{B}]) \z > 0,
\]
for all $\z$ with
$\z \neq \0$ and 
$\left[\grad \c(\x^*)\tr \;\; \0 \right] \z = \0$.
In particular, this means that for any
$\z_x \in \mbb{R}^{n}$ satisfying
$\z_x \neq \0$ and 
$\grad \c(\x^*) \tr \z_x = \0$,
we have $\z = (\z_x, \0)$ satisfies
\[
0 < \z   \tr \left(\mbf{H}^*                 + \rho  [\mbf{A}\; \mbf{B}] \tr [\mbf{A}\; \mbf{B}]\right) \z  
  = \z_x \tr \left(\mbf{H}_{xx}(\x^*,\bmu^*) + \rho   \mbf{A}            \tr  \mbf{A}\right) \z_x.
\]
Finally we note that $\mbf{H}_{xx}(\x^*,\bmu^*) + \rho \mbf{A} \tr \mbf{A}$ is the Hessian of the Lagrangian of the subproblem~\eqref{ADMMSP} evaluated at $(\x^*, \bmu^*)$.
\end{proof}

\section{A result in parametric optimization}
\label{sec:technicalLemma}

Required by the proof of Lemma~\ref{lem:primalProperties}, the following is a technical result, although it relies on standard and straightforward results.
It is a modification of a classic sufficiency result for local optimality in the parametric setting, stating that there is a minimum size neighborhood on which local optimality holds, for all problems in a perturbed family.

\begin{lemma}
\label{lem:MinimizerRadius}
Let 
$h : (\z,\p) \mapsto h(\z,\p)$
be a real-valued mapping 
(on $\mbb{R}^{n_z} \times \mbb{R}^{n_p}$)
such that
$h$ is twice-continuously differentiable with respect to $\z$ on some open set $D_z$, for all $\p$ in some open set $D_p$.
In addition, assume that
$\grad_{zz}^2 h$ 
is continuous on $D_z \times D_p$.
Assume that for all $\p \in D_p$, there exists $\z^*(\p) \in D_z$ such that
$\grad_z h(\z^*(\p),\p) = \0$
and
$\grad_{zz}^2 h(\z^*(\p),\p)$ is positive definite.
Then for any $\bar\p \in D_p$, there exist positive constants $\epsilon$ and $\delta$
such that for all $\p \in N_{\delta}(\bar\p)$,
if $\norm{\z^*(\p) - \z^*(\bar\p)} \le \epsilon$,
then $\z^*(\p)$ minimizes $h(\cdot,\p)$ on the neighborhood $N_{\epsilon}(\z^*(\p))$.
\end{lemma}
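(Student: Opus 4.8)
The plan is to establish a \emph{quantitative} version of the classical second-order sufficiency test, in which the radius of the neighborhood of optimality is uniform over all parameters $\p$ near $\bar\p$. The engine is the mean-value form of Taylor's theorem: since $\grad_z h(\z^*(\p),\p) = \0$, for any $\z$ and any point $\xi$ on the segment joining $\z^*(\p)$ to $\z$ one has $h(\z,\p) - h(\z^*(\p),\p) = \tfrac{1}{2}(\z - \z^*(\p))\tr \grad_{zz}^2 h(\xi,\p)(\z - \z^*(\p))$. Thus it suffices to guarantee $\grad_{zz}^2 h(\xi,\p) \symgt \0$ for every such $\xi$, and the entire difficulty is to secure this positive definiteness on a neighborhood whose size does not shrink as $\p$ varies.

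First I would fix $\bar\z = \z^*(\bar\p)$ and exploit the joint continuity of $\grad_{zz}^2 h$ on $D_z \times D_p$. Since $\grad_{zz}^2 h(\bar\z,\bar\p) \symgt \0$ and the smallest eigenvalue of a symmetric matrix depends continuously on its entries, positive definiteness is an open condition; hence there exist $r > 0$ and $\delta > 0$, which I would shrink so that $N_r(\bar\z) \subseteq D_z$ and $N_\delta(\bar\p) \subseteq D_p$, such that $\grad_{zz}^2 h(\z,\p) \symgt \0$ for every $\z \in N_r(\bar\z)$ and every $\p \in N_\delta(\bar\p)$. This is where the hypothesis of joint continuity of the Hessian is indispensable, and it is the only analytically nontrivial step.

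I would then set $\epsilon = r/2$ and carry out the geometric bookkeeping. Suppose $\p \in N_\delta(\bar\p)$ with $\norm{\z^*(\p) - \bar\z} \le \epsilon$, and take any $\z \in N_\epsilon(\z^*(\p))$. Every point $\xi = \z^*(\p) + t(\z - \z^*(\p))$ on the connecting segment ($t \in [0,1]$) then satisfies $\norm{\xi - \bar\z} \le \norm{\z^*(\p) - \bar\z} + t\norm{\z - \z^*(\p)} < \epsilon + \epsilon = r$, so $\xi \in N_r(\bar\z)$; convexity of the ball also keeps the whole segment inside $D_z$, where $h(\cdot,\p)$ is twice continuously differentiable. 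Consequently $\grad_{zz}^2 h(\xi,\p) \symgt \0$ along the segment, and the Taylor identity above gives $h(\z,\p) \ge h(\z^*(\p),\p)$, which is exactly the claim that $\z^*(\p)$ minimizes $h(\cdot,\p)$ on $N_\epsilon(\z^*(\p))$.

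The main obstacle is the uniformity established in the second step: a pointwise, $\p$-by-$\p$ application of second-order sufficiency would produce a radius depending on $\p$, which is useless here. Joint continuity of $\grad_{zz}^2 h$ on $D_z \times D_p$ upgrades this to a single product neighborhood $N_r(\bar\z) \times N_\delta(\bar\p)$ of positive definiteness, after which the triangle-inequality bookkeeping (choosing $\epsilon = r/2$ so that both $\z^*(\p)$ and the target $\z$ remain within $r$ of $\bar\z$) and the Taylor estimate are routine.
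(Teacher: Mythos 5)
Your proof is correct and follows essentially the same route as the paper's: joint continuity of $\grad_{zz}^2 h$ yields a product neighborhood of $(\z^*(\bar\p),\bar\p)$ on which the Hessian is positive definite, and the mean-value form of Taylor's theorem (the linear term vanishing since $\grad_z h(\z^*(\p),\p)=\0$) then delivers the minimization claim, with the same $\epsilon = r/2$ triangle-inequality bookkeeping. The only difference is cosmetic: the paper additionally extracts, via compactness and continuity of eigenvalues, a uniform lower bound $\lambda>0$ on the minimum eigenvalue of the Hessian over the product set, obtaining the quantitative growth estimate $h(\z^*(\p)+\s,\p)-h(\z^*(\p),\p)\ge\tfrac{1}{2}\lambda\norm{\s}^2$, whereas you invoke pointwise positive definiteness at the mean-value point, which suffices for the stated conclusion.
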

\begin{proof}
That $\z^*(\p)$ minimizes $h(\cdot,\p)$, for all $\p$, follows from the standard second-order sufficient conditions for unconstrained minimization;
see, for instance \cite[Prop.~1.1.3]{bertsekas_nlp}.
The challenge is to show that the radius of the neighborhood on which it is a minimizer is constant with respect to $\p$.
Choose $\bar\p \in D_p$.
Since $\grad_{zz}^2 h(\z^*(\bar\p),\bar\p)$ is positive definite and
$\grad_{zz}^2 h$ is continuous, for all $(\z,\p)$ sufficiently close to $(\z^*(\bar\p),\bar\p)$,
$\grad_{zz}^2 h(\z,\p)$ is positive definite.
In particular, we can choose $\epsilon'$, $\delta$ so that
\[
K = \set{ (\z,\p) : \norm{\z - \z^*(\bar\p)} \le \epsilon', \norm{\p - \bar\p} \le \delta } 
	\subset D_z \times D_p
\]
and $\grad_{zz}^2 h(\z,\p)$ is positive definite for all $(\z,\p)\in K$.
Since the eigenvalues of a matrix depend continuously on the elements of a matrix (\cite[Proposition~A.14]{bertsekas_nlp}), we have that the eigenvalues of $\grad_{zz}^2 h(\z,\p)$ 
(and in particular the \emph{minimum} eigenvalue) 
are continuous and positive for all $(\z,\p)\in K$, and since $K$ is compact,
we can choose a constant $\lambda > 0$ which is a lower bound on the minimum eigenvalue for all $(\z,\p)\in K$.

Now choose any 
$\p \in N_{\delta}(\bar\p)$ and assume 
$\norm{\z^*(\p) - \z^*(\bar\p)} \le \sfrac{\epsilon'}{2}$.
Consider a Taylor expansion of $h(\cdot,\p)$ at $\z^*(\p)$:
for any $\s$ such that $\norm{\s} \le \sfrac{\epsilon'}{2}$, there exists $\alpha_{s,p} \in (0,1)$ such that
\[
	h(\z^*(\p)+\s,\p) = 
	h(\z^*(\p),\p) 
	+ \frac{1}{2} \s\tr \grad_{zz}^2 h(\z^*(\p) + \alpha_{s,p}\s,\p) \s
\]
where the linear term may be ignored because $\grad_z h(\z^*(\p),\p) = \0$.
Note that $(\z^*(\p) + \alpha_{s,p}\s,\p) \in K$, no matter what the specific value of $\alpha_{s,p}$ is.
Consequently, we can use the lower bound $\lambda$ on the minimum eigenvalue of the Hessian to see that
\[
	h(\z^*(\p)+\s,\p) - h(\z^*(\p),\p) 
	\ge
	\frac{1}{2} \lambda \norm{\s}^2
\]
(see for instance \cite[Prop.~A.18]{bertsekas_nlp}).
Define $\epsilon \equiv \sfrac{\epsilon'}{2}$.
The right-hand side of the above inequality is nonnegative, showing that 
$\z^*(\p)$ is a minimizer of $h(\cdot,\p)$ on the neighborhood $N_{\epsilon}(\z^*(\p))$.
\end{proof}


\bibliography{%
books,%
convex,%
nlp,%
special}

\end{document}